\definecolor{C0}{HTML}{1f77b4}
\definecolor{C1}{HTML}{ff7f0e}
\definecolor{C2}{HTML}{2ca02c}
\definecolor{C3}{HTML}{d62728}
\newcommand{\PP}{\mathbb{P}}
\newcommand{\Prob}[1]{\mathbb{P}\left( #1 \right)}
\newcommand{\E}[1]{\mathbb{E}\left[ #1 \right]}
\newtheorem{theorem}{Theorem}
\newtheorem{lemma}{Lemma}
\theoremstyle{remark}
\title{Planted clique recovery in \\ random geometric graphs}
\author{Konstantin Avrachenkov, Andrei Bobu, \\ Nelly Litvak, Riccardo Michielan}
\begin{document}

\maketitle


\abstract{We investigate the problem of identifying planted cliques in random geometric graphs, focusing on two distinct algorithmic approaches: the first based on vertex degrees (VD) and the other on common neighbours (CN). We analyse the performance of these methods under varying regimes of key parameters, namely the average degree of the graph and the size of the planted clique. We demonstrate that exact recovery is achieved with high probability as the graph size increases, in a specific set of parameters. Notably, our results reveal that the CN-algorithm significantly outperforms the VD-algorithm. In particular, in the connectivity regime, tiny planted cliques (even edges) are correctly identified by the CN-algorithm, yielding a significant impact on anomaly detection. Finally, our results are confirmed by a series of numerical experiments, showing that the devised algorithms are effective in practice.}

\maketitle

\section{Introduction}

Graphs give a natural way to model real-world networks. One of the most natural questions that arises in such modelling is the identification of nodes or groups of nodes with certain properties. For instance, many researchers are interested in finding large cliques in graphs that correspond to tightly connected communities in a real network. One possible variant of such a task is to reveal `suspicious' structures or anomalies in the graph that cannot naturally appear in the corresponding network.

The problem of identification of artificial large cliques in Erd\H{o}s--R\'enyi graphs is a classical problem in graph theory. It is called the \textit{planted} (or \textit{hidden}) \textit{clique problem} and can be formalised as follows. Consider a realization of the Erd\H{o}s--R\'enyi graph $G^{ER}(n, 1/2)$ with $n$ nodes and the probability of an edge 1/2. Choose randomly its $k$ nodes. These nodes are declared a \textit{planted clique}, and we add all missing edges between them. The task is to identify the planted clique with high probability using a polynomial-time algorithm. 

The planted clique problem appeared in 1995 with the work of Ku{\v{c}}era \cite{kuvcera1995expected}. He found that if $k  = \Omega(\sqrt{n \log n})$, then the problem can be solved by a simple algorithm based on comparing the degrees of vertices. Let us notice that the largest clique in an Erd\H{o}s--R\'enyi graph $G^{ER}(n, 1/2)$ is of size $\log_2 n (1+o(1))$ a.\,s. Therefore, such an algorithm may be far from optimal. In 1998, it was proven in \cite{alon1998finding} that for any $c > 0$, there is a polynomial-time algorithm that recovers a planted clique with high probability for $k > c\sqrt {n} $. Certain classes of polynomial algorithms have been proved to fail for $k = o(\sqrt n)$, but a general barrier has not yet been found (see discussion in \cite{gamarnik2024landscape}--\cite{hazan2011hard}).

It is surprising and curious that such an interesting and natural problem has not been stated for other models of random graphs. In this paper, we pose a similar question for random geometric graphs, which have become a fairly popular research object lately. Random geometric graphs resemble real social, technological, and biological networks in many aspects due to their tendency to cluster \cite{higham2008fitting}--\cite{preciado2009spectral}. Additionally, they can be useful in statistics and machine learning tasks: correlations between observations in a dataset can be represented by links between corresponding vertices (see, e.g., \cite{ariascastro2015detecting}). Artificial cliques can indicate a malicious intervention in the network or issues with the data. 

Another curious example of a possible application is steganography (see \cite{ker2017square} and \cite{ker2008square}). The idea is that a hidden message (\textit{stego}) is put inside a big piece of information (\textit{cover}, which can be, say, an image). The adjacency matrices can represent such pieces, while a planted clique placed within them corresponds to a stego (the Ising model from \cite[Ex. 2.1]{guyon1995random} appears to be similar to that setting). One of the problems of steganography is to determine how large a stego can be that cannot be detected by any algorithm. This is perfectly consistent with the planted clique recovery problem. In steganography, it is usually assumed that the pixels for the stego are chosen independently, whereas in the case of the planted clique problem, they are dependent (as elements of the adjacency matrix). Nevertheless, we think that a formulation with dependencies might be even more realistic and interesting for steganography. 

One of the key properties of random geometric graphs is the dependency between edges that results from their geometrical structure. In these graphs, the nodes have corresponding spatial positions and the probability of edge appearance depends on the distance between these positions (in contrast to Erd\H{o}s--R\'enyi graphs that are more `homogeneous'). As a rule, random geometric graphs have larger clique numbers than Erd\H{o}s--R\'enyi graphs with the same average node degree. Indeed, in a geometric graph, a cluster of vertices that are sufficiently close to each other will automatically form a clique. This provides intuition that the planted clique problem in this case can be richer and will depend on the graph's parameters. 

In the current paper, we consider hard random geometric graphs (RGG) \cite{mcdiarmid2003random}, \cite{penrose2003random}, \cite{muller2008two}. In this model, the vertices' positions are chosen uniformly at random on a $d$-dimensional torus, where $d \geq 1$ is fixed, and a threshold distance $r:=r_n$ is set such that nodes are connected if and only if their distance (induced by the $L_2$ norm on the torus) is less than $r$. 

The main point of this paper is to show that the geometric structure of a graph allows for the detection of artificial structures (such as planted cliques) much more effectively than in Erd\H{o}s--R\'enyi graphs. It is useful to recall that the asymptotics of the clique number of the hard geometric graph on $n$ vertices is (see e.g., \cite[Theorem 1.2]{mcdiarmid2011chromatic} and \cite[Theorem 6.16]{penrose2003random}):
\begin{equation}\label{eq:natural_clique_number}
    \omega_n \sim 
    \begin{cases}
        O(1) \quad a.s., & \text{if $\mu \leq n^{-\varepsilon}$ for some $\varepsilon>0$},\\
        \frac{\log n}{\log\left( \frac{\log n}{\mu} \right)} \quad a.s., & \text{if $n^{-\varepsilon} \ll \mu \ll \log(n)$ for all $\varepsilon>0$},\\
        \mu f(t) \quad a.s., & \text{if $\mu / \log(n) \to t \in (0,\infty)$},\\
        \mu/2^d \quad a.s., & \text{if $\mu \gg \log(n)$},
    \end{cases}
\end{equation}
where $\mu$ is the expected degree of $G(n,r)$, and $f(t):= \frac{H_{+}^{-1}(2^d/t)}{2^d}$, where $H^{-1}_+$ is the inverse of the entropy $H(x) = 1-x+x \log(x)$ on the half-line $(1,\infty)$. Equation \eqref{eq:natural_clique_number} shows a major difference in the clique number among different regimes of RGG. In the dense and connectivity settings, the largest clique in the graph has size proportional to $\mu$; in sparse settings, it is super-linear in $\mu$. It might be tempting to conjecture that any artificially planted clique needs a size larger than the \textit{natural} clique number in RGG, to detect it. Although this might be the case for the Erd\H{o}s--R\'enyi models, in RGGs, it is possible to go far below the thresholds in \eqref{eq:natural_clique_number}, as we will show later.

Our main contributions are as follows. First, we study an algorithm based on vertex degrees (shortly, \textit{VD-algorithm}) that was introduced in \cite{kuvcera1995expected} for the Erd\H{o}s--R\'enyi model. We determine a recovery threshold for the VD-algorithm to recover a planted clique. Second, we introduce a simple polynomial algorithm based on common neighbours (we denote this algorithm by \textit{CN-algorithm}). We prove that the CN-algorithm solves the problem for hard random geometric graphs in a much richer regime. Surprisingly, the algorithm recovers even extremely small cliques, such as planted edges. Finally, we present practical experiments demonstrating that CN- and VD-algorithms are effective in practice. 

The paper is organized as follows. In Section \ref{sec:problem_description}, we introduce geometric random graphs, formulate the planted clique problem, and give a detailed description of VD- and CN-algorithms. Main results concerning the theoretical performance guarantees of the algorithms are collected in Section \ref{sec:algorithm_analysis}. Next, we provide in Section \ref{sec:numerical_experiments} numerical experiments as evidence supporting the theoretical results. Proof for the VD-algorithm and the CN-algorithm can be found respectively in Sections \ref{sec:VD_proof} and \ref{sec:CN_proof}. Lastly, we collect some thoughts on further research directions in Section \ref{sec:conclusion}.

\section{Problem statement}\label{sec:problem_description}

We will now state the problem more precisely. We use the common notation $G = (V,E)$ to indicate a simple graph, where $V$ is a set of vertices and $E$ is the set of edges.
Let $X_1, X_2, \ldots $ be independent random variables uniformly distributed in the unit $d$-dimensional cube $U = [0,1]^d$. To exclude boundary effects, we suppose that $U$ is equipped with the toroidal metric. Formally, for any position vectors $x = (x_1, ..., x_d), y = (y_1, ..., y_d)$, the distance on $U$ is given by 
$$
    d_T(x,y) = \left( \sum_{i=1}^d \min\left( |x_i - y_i|,\ 1 - |x_i - y_i| \right)^2 \right)^{1/2}. 
$$
The random geometric graph model is parameterized by the expected number of vertices, $n > 0$, and a connection probability function, $p_n$. Let us denote by $N_n$ a Poisson random variable with mean $n$. We consider the vertex set $V = \{1, \ldots, N_n\}$, where each vertex $i$ is associated with its position $X_i$. Any two vertices at distance $x$ are connected independently with the connection probability $p_n(x)$. In other words, 
$$
    p_n(x) = \mathbb P\bigl( \{i, j\} \in E \,|\,d_T(X_i, X_j) = x \bigr).  
$$
The model defined in this way is called a random geometric graph model, and will be denoted by $G(n, p_n)$. In what follows, we will focus on \textit{hard random geometric graphs} (RGG), where the connection probability $p_n$ has threshold form:
\begin{equation}
    p_n(x) = 1(x \leq r_n) \nonumber
\end{equation}
for some $r_n > 0$, and for technical reasons we will assume $r_n < 1/4$. In other words, in RGGs, each vertex has its sphere of influence of radius $r_n$, within which it forms connections. The resulting graph is denoted by $G(n,r_n)$. 

The average expected degree in $G(n,r_n)$ is denoted by $\mu_n$. Average degree and radius of the connectivity are linked through the equation $\mu_n = n \phi_d r_n^d$, where $\phi_d$ is the volume of the $d$-dimensional ball of radius 1.

We are now ready to describe the \textit{planted clique problem}, which is the object of the current investigation. Specify a positive integer $k_n \geq 2$, that can vary with the network size $n$. Consider the graph $G(n,r_n)$ and select randomly $k_n$ vertices $K_n = \{i_1, \ldots, i_{k_n}\}$ from $V$. Next, add all the missing edges between all pairs of vertices in $K_n$, forcing $K_n$ to form a clique. We denote the resulting graph by $G_{k_n}(n, r_n)$. Our goal is to recover the planted clique $K_n$. We focus on the conditions for the exact recovery, that is, we would like to build an algorithm that outputs a set $\hat{K}_n$ of size $k_n$ such that 
\begin{equation}
    \lim_{n \to \infty} \Prob{K_n = \hat{K}_n} = 1. \nonumber
\end{equation}

\paragraph*{Well-posedness of the problem}
In the process of planting a clique on the vertex set $K$ in $G(n, r_n)$, some edges between elements of $K$ may already be present. Therefore, only the missing edges are added. However, for certain parameter choices, the clique $K$ may already be complete in the original graph $G(n, r_n)$. In this case, $G(n, r_n) = G_{k_n}(n, r_n)$, making the problem ill-posed because it is impossible to distinguish an instance of the random geometric graph from the graph augmented by a planted clique. 
Consequently, we will exclude the ill-posed parameter settings defined by $\{k = O(1)\} \wedge \{\mu_n \propto n\}$ from our analysis because for this parameter regime, $\Prob{G(n, r_n) = G_{k_n}(n, r_n)} > 0$, and we might otherwise encounter recovery issues.

\paragraph*{Notation}
We now introduce some notations used throughout the paper. The degree of a vertex $i \in V$ in a graph is denoted by $Z_i = |N(i)|$, where $N(i)$ indicates the set of neighbours of $i$. The number of neighbours of $i$ in $U \subset V$ is instead denoted by $Z_i^{U} = |N(i) \cap U|$ (note that $i$ does not necessarily belong to $U$). Moreover, the number of common neighbours of vertices $i$ and $j$ is denoted by $Z_{ij} = |N(i) \cap N(j)|$. 
The maximum and minimum degrees in $G(n,r_n)$ are denoted by $\Delta_n$ and $\delta_n$ respectively.

The comparisons $f(n) \ll g(n)$, $f(n) \gg g(n)$ and $f(n) \sim g(n)$ are understood standardly: $f(n) = o(g(n))$, $g(n) = o(f(n))$ and $f(n) = g(n) (1 + o(1))$ respectively, as $n\to\infty$. For a graph $G$, the notation $\omega(G)$ denotes its clique number (the size of the largest clique). We say that a sequence of events $A_n$ happens almost surely (a.s.) if there exists $n_0 \in \mathbb N$ such that $\mathbb P(A_n) = 1$ for all $n > n_0$.  We say that a sequence of events $A_n$ occurs with high probability (or, equivalently, asymptotically almost surely) if $\lim_{n\to\infty} \mathbb P(A_n) = 1$.

\subsection{Vertex Degrees (VD) algorithm}

Since the addition of planted edges increases the degree of a vertex, the most natural and simplest way to detect a hidden clique is based on vertex degrees. This algorithm, although naive, gives a basic result for Erd\H{o}s--R\'enyi graphs and can be described as follows. First, we calculate the degrees of all the vertices. Then, we construct $\hat{K}^{(\text{VD})}$ by selecting the $k$ vertices with the largest degrees. We call this VD-algorithm, and its formal description is given in Algorithm \ref{algo:VD-algorithm}. 
\begin{algorithm}
	\KwInput{Graph $(V,E)$; size $k$ of the planted clique} 
	
	\KwOutput{A set of vertices}~\\
	
	\For{$i \in V$}
	{   
	    Count $Z_i = \deg(i)$
	}
	Select the largest $Z_{i_1}, \ldots, Z_{i_{k}}$
    \\
	\Return $\hat{K}^{(\text{VD})}=\{i_1, \ldots, i_{k}\}$
	\caption{VD-algorithm}
	\label{algo:VD-algorithm}
\end{algorithm}
The computational complexity of the VD-algorithm in $G_{k_n}(n, r_n)$ is $O_p(n(\log(k_n) + \mu_n))$ for computing the degrees of all vertices and finding top-$k_n$ degrees by the min-heap method~\cite{cormen2022introduction}. The main advantages of this algorithm are its rather low complexity and simple implementation.

The idea is that if $k_n$ is sufficiently large, then the algorithm returns exactly the set $K_n$ with high probability. This algorithm was first used for planted clique recovery in Erd\H{o}s--R\'enyi graphs (see \cite[Section 6, Theorem 6.1]{kuvcera1995expected}). 
\begin{theorem}[\cite{kuvcera1995expected}]\label{th-ER-vertex-degree-works}
    Let $G^{\text{ER}}_{k_n}(n,1/2)$ be an Erd\H{o}s--R\'enyi graph on $n$ vertices with a planted clique of size $k_n > C\sqrt{n\log n}$ with $C > 0$ large enough. Then, the VD-algorithm recovers the planted clique in $G^{\text{ER}}_{k_n}(n, 1/2)$ with high probability. 
\end{theorem}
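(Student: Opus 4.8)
The plan is to analyze the degree distribution in the planted graph $G^{\text{ER}}_{k_n}(n,1/2)$ and show that, with high probability, there is a clean separation between the degrees of the $k_n$ planted-clique vertices and the degrees of all other vertices. Write $Z_i$ for the degree of vertex $i$ in the planted graph. For a vertex $i \notin K_n$, its degree is a sum of $n-1$ independent Bernoulli$(1/2)$ random variables (the planting does not affect edges incident to $i$ when $i \notin K_n$ --- actually edges from $i$ to $K_n$ are still present with probability $1/2$ independently), so $Z_i \sim \text{Bin}(n-1, 1/2)$, with mean roughly $n/2$ and standard deviation $\sqrt{n}/2$. For a vertex $i \in K_n$, it is connected to the other $k_n - 1$ clique vertices deterministically, and to each of the remaining $n - k_n$ vertices independently with probability $1/2$; hence $Z_i = (k_n - 1) + \text{Bin}(n - k_n, 1/2)$, which has mean roughly $n/2 + k_n/2$.

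The first step is a quantitative tail bound: by a Chernoff/Hoeffding bound, for each fixed vertex $\mathbb{P}(|Z_i - \mathbb{E}Z_i| \ge t) \le 2\exp(-2t^2/n)$. Choosing $t = c\sqrt{n \log n}$ for a suitable constant makes this probability $o(n^{-1})$, so by a union bound over all $n$ vertices, with high probability \emph{every} vertex has degree within $c\sqrt{n\log n}$ of its mean. The second step is to combine these events: on this good event, every non-clique vertex has degree at most $n/2 + c\sqrt{n\log n}$ (up to lower-order corrections), while every clique vertex has degree at least $n/2 + k_n/2 - c\sqrt{n\log n}$. If $k_n > C\sqrt{n\log n}$ with $C > 4c$, say, then $k_n/2 - c\sqrt{n\log n} > c\sqrt{n\log n}$, so the minimum clique degree strictly exceeds the maximum non-clique degree. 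Consequently the $k_n$ largest degrees are exactly the vertices of $K_n$, and the VD-algorithm returns $\hat K^{(\text{VD})} = K_n$; since this holds on an event of probability $1 - o(1)$, recovery succeeds with high probability.

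I would organize the write-up as: (i) state the two degree distributions precisely, being careful that for $i \in K_n$ we condition on the planted edges and the remaining edges are still fresh independent coin flips; (ii) apply Hoeffding's inequality to each $Z_i$ and union-bound; (iii) do the arithmetic showing the gap is positive when $C$ is large enough, and conclude. The calculations are all elementary, so most of the work is bookkeeping with the constants.

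The main obstacle --- really a minor subtlety rather than a genuine difficulty --- is handling the overlap between the two vertex classes correctly and making sure the constant $C$ is chosen consistently: the fluctuation bound must be applied simultaneously to clique and non-clique vertices, and the ``up to lower-order corrections'' (the difference between $n$, $n-1$, $n-k_n$ in the binomial parameters, and between $\mathbb{E}Z_i$ and $n/2$) must be absorbed into the constant without circularity. One should also note that the argument in fact gives a gap of order $\sqrt{n\log n}$ between the classes, so the recovered set is unique; this is stronger than merely identifying \emph{some} maximum and is what licenses the deterministic ``select the largest $k$'' step in Algorithm~\ref{algo:VD-algorithm}.
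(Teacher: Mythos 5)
The paper does not prove this statement itself --- it is imported verbatim from Ku\v{c}era's paper as a known result --- so there is no internal proof to compare against; the closest analogue is the paper's proof of Theorem~\ref{thm:successVDalgo}, which uses exactly your strategy (show the minimum degree over $K_n$ exceeds the maximum degree over $V\setminus K_n$ via degree concentration). Your argument is the standard and correct one: the degree laws $\mathrm{Bin}(n-1,1/2)$ off the clique and $(k_n-1)+\mathrm{Bin}(n-k_n,1/2)$ on it are right, Hoeffding plus a union bound (which needs no independence between the $Z_i$) gives simultaneous fluctuations of order $\sqrt{n\log n}$, and the gap $k_n/2 > C\sqrt{n\log n}/2$ dominates for $C$ large enough, so no issues.
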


\subsection{Common Neighbours (CN) algorithm}

Next, we introduce an algorithm based on the common neighbours of two adjacent vertices, Algorithm~\ref{algo:CN-algorithm}. It constructs the estimator $\hat{K}^{\text{(CN)}}$ as follows: we check all edges $(i,j)$; if $i$ and $j$ have in common exactly $k-2$ neighbours, we check whether $i,j$, and their common neighbours form a clique; if this is the case, the algorithm returns this $k$-clique. If such an edge $(i,j)$ is not found, it returns the empty set.
\begin{algorithm}
	\KwInput{Graph $(V,E)$; size $k$ of the planted clique}
	
	\KwOutput{A set of vertices}~\\
	
	\For{$(i,j) \in E$}
	{   
	    \CNStep \\
	    Let $Z_{ij} = N(i) \cap N(j)$ be the set of common neighbors of $i$ and $j$\\
	    \If{$|Z_{ij}| = k-2$}
	    {
	        \CliqueStep \\ 
	        \lIf{$Z_{ij}$ forms a clique of size $k-2$ }{\Return $\{i\} \cup \{j\} \cup Z_{ij}$} 
	    }
	}
	\Return $\hat{K}^{\text{(CN)}} = \varnothing$
	\caption{CN-algorithm}
	\label{algo:CN-algorithm}
\end{algorithm}
In $G_{k_n}(n,r_n)$, the worst case complexity of this algorithm is $O_p(\mu_n n (n + k_n^2))$, since for any fixed edge, finding common neighbours requires at most $n$ operations, and testing for a clique of size $k_n$ requires not more than $\binom{k_n}{2}$ operations. 

The intuition behind this algorithm is illustrated in Figure \ref{fig: common-neighbours-area-algo-desc}, which shows why the CN-algorithm does not confuse the planted clique with natural cliques in RGG. If two vertices $i,j\notin K_n$ form a natural edge, then they might have common neighbours in both regions $R_1$ and $R_2$, which are contained in the intersection of the two balls centered in $X_i, X_j$. Pairs of vertices located in opposite regions have a distance larger than $r_n$; thus, they are not naturally adjacent. In other words, if $i$ and $j$ have common neighbours in both $R_1$ and $R_2$, then their common neighbours cannot form a clique. Therefore, the algorithm is likely to skip the pair $(i,j)$.

The common neighbours approach has been widely used for link prediction in networks, through the definition of score-based methods that typically depend on the number of common neighbours of two vertices \cite{zhou2009predicting,liben2003link}. In this paper, we demonstrate that common neighbours in geometric networks are informative for other tasks, including anomaly detection. 

\begin{figure}[t]
    \centering
    \includegraphics[width = 0.6\textwidth]{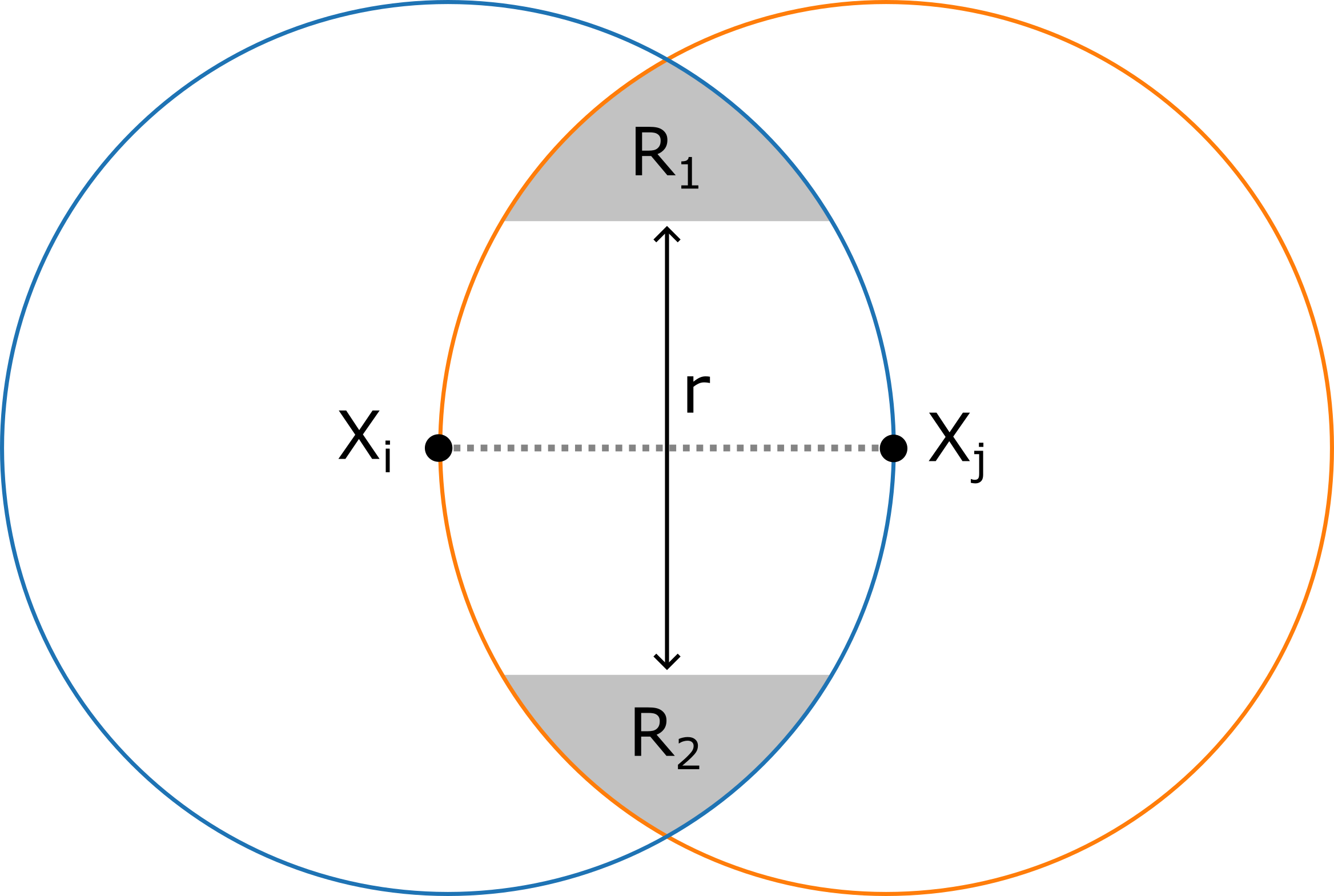}
    \caption{Representation in dimension $d=2$ of the regions $R_1$, $R_2$. Vertices with coordinates in $R_1$ cannot connect to vertices with coordinates in $R_2$, unless they are part of the planted clique.}
    \label{fig: common-neighbours-area-algo-desc}
\end{figure}

\section{Analysis of the algorithms}\label{sec:algorithm_analysis}

In this section, we summarize our results on VD- and CN-algorithms. Specifically, we retrieve the regimes of RGG under which exact recovery of the clique $K_n$ is possible, utilizing the estimators $\hat{K}_n^{\text{(VD)}}$ and $\hat{K}_n^{\text{(CN)}}$. 

\subsection{VD-algorithm}

Recall that $\mu_n=  n \phi_d r_n^d$ denotes the expected degree in $G(n, r_n)$, whereas $\Delta_n$ and $\delta_n$ stand for the maximum and minimum degrees, respectively. Assume that $\mu_n/\log(n) \to\alpha \in [0,\infty]$ when $n$ goes to infinity. From \cite{penrose2003random} and \cite{mcdiarmid2003random} we know that in the hard geometric graph conditioned on the number of vertices $N_n = n$,
\begin{equation}
    \label{eq:maximum_RGG_threshold}
    \Delta_n/T(n) \stackrel{p}{\rightarrow} 1 , \quad \text{ with }
    T(n) =  
    \begin{cases}
    \frac{\log(n)}{\log(\log(n)/\mu_n)} & \text{if $\alpha=0$ }, \\ 
    \mu H^{-1}_+(\alpha^{-1}) & \text{if $\alpha \in (0,\infty]$ },
    \end{cases}
\end{equation}
and
\begin{equation}\label{eq:minimum_RGG_threshold}
    \delta_n/t(n) \stackrel{p}{\rightarrow}  1, \quad \text{ with } 
    t(n) =  
    \begin{cases}
    0 & \text{if $\alpha \in [0, 1)$}, \\ 
    \mu H^{-1}_{-}(\alpha^{-1}) & \text{if $\alpha \in [1,\infty]$}.
    \end{cases}
\end{equation}
The functions $H^{-1}_+$ and $H^{-1}_-$ are the two branches of the inverse entropy function, $H(x) = 1-x+x \log(x)$. Using the Lambert $W$ function, for any $y>0$ we can write explicitly $H_+^{-1}(y) = \exp\left\{{W_0\left(\frac{y - 1}{e}\right) + 1}\right\}>1$ and $H_-^{-1}(y) = \exp\left\{{W_{-1}\left(\frac{y - 1}{e}\right) + 1}\right\}<1$. When $\alpha = \infty$, we use the convention $1/\infty = 0$; thus, in this case $H_{+}^{-1}(0) = H_{-}^{-1}(0) = 1$.

The results in Equations \eqref{eq:maximum_RGG_threshold}-\eqref{eq:minimum_RGG_threshold} extend to the random graph $G(n,r)$ defined in Section \ref{sec:problem_description}, where $N_n$ is Poisson with mean $n$. Indeed, with high probability $n - n^{2/3} < N_n < n + n^{2/3}$. Therefore,
\begin{align*}
    T(n-n^{2/3})<\Delta_n<T(n+n^{2/3}) \quad \text{w.h.p.}, \\
    t(n-n^{2/3})<\delta_n<t(n+n^{2/3}) \quad \text{w.h.p.},   
\end{align*} 
hence the maximum and minimum degrees squeeze onto the sequences $T(n)$ and $t(n)$ respectively.

We now state the main result for the VD-algorithm.
\begin{theorem}[VD-algorithm -- positive result]\label{thm:successVDalgo}
Given $\alpha = \lim_{n \to \infty} \mu_n /\log(n)$,
\begin{itemize}
    \item if $\alpha \in [0,\infty)$ (sparse or connectivity regime), then the VD-algorithm exactly recovers the planted clique in $G_{k_n}(n,r_n)$ as soon as $k_n > (1+\varepsilon)(T(n)-t(n))$, for any $\varepsilon > 0$ and $T(n),t(n)$ defined in \eqref{eq:maximum_RGG_threshold} and \eqref{eq:minimum_RGG_threshold};
    \item if $\alpha = \infty$ (dense regime), then the VD-algorithm exactly recovers the planted clique in $G_{k_n}(n,r_n)$ as soon as $k_n > \varepsilon \mu_n$, for any $\varepsilon > 0$.
\end{itemize}
\end{theorem}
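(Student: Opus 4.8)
\emph{Proof proposal.} The plan is to reduce exact recovery to a deterministic comparison of degrees and then plug in the known extremal-degree asymptotics for hard RGGs. First I would note that planting the clique changes degrees transparently: a vertex $j\notin K_n$ gains no incident edge, so its degree in $G_{k_n}(n,r_n)$ is still $Z_j\le\Delta_n$; a vertex $i\in K_n$ acquires degree $Z_i+(k_n-1)-Z_i^{K_n}\ge\delta_n+(k_n-1)-B_n$, where $B_n:=\max_{i\in K_n}Z_i^{K_n}$ is the largest degree inside $K_n$ with respect to the original graph. Hence it suffices to show that, w.h.p.,
\begin{equation}\label{eq:VDsuffices}
\delta_n+(k_n-1)-B_n>\Delta_n,
\end{equation}
since then every clique vertex strictly outranks every non-clique vertex, the $k_n$ top degrees are exactly those of $K_n$, and the VD-algorithm returns $\hat K^{(\mathrm{VD})}_n=K_n$ with no tie ambiguity.

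Next I would substitute $\Delta_n=(1+o_p(1))T(n)$ and $\delta_n=(1+o_p(1))t(n)$ from \eqref{eq:maximum_RGG_threshold}--\eqref{eq:minimum_RGG_threshold} (together with the Poisson-squeezing remark). In the sparse/connectivity regime one has $T(n)-t(n)\asymp T(n)\to\infty$: indeed $t(n)\equiv0$ for $\alpha<1$, while for $\alpha\in(1,\infty)$, $T(n)-t(n)=\mu_n\bigl(H^{-1}_+(\alpha^{-1})-H^{-1}_-(\alpha^{-1})\bigr)\asymp\mu_n\asymp T(n)$. Thus the $o_p$-fluctuations of $\Delta_n,\delta_n$ are of order $o(T(n)-t(n))$, so \eqref{eq:VDsuffices} is implied by $k_n-B_n>(1+\tfrac{\varepsilon}{2})(T(n)-t(n))+1$ w.h.p.; comparing with the hypothesis $k_n>(1+\varepsilon)(T(n)-t(n))$, it remains only to show that $B_n$ is negligible, namely $B_n=o(T(n)-t(n))$ w.h.p. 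In the dense regime $T(n)=t(n)=\mu_n$, so $\Delta_n-\delta_n=o_p(\mu_n)$ and \eqref{eq:VDsuffices} reduces to $k_n-B_n>o(\mu_n)$ w.h.p.

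To control $B_n$ I would use exchangeability to take $K_n=\{1,\dots,k_n\}$ with i.i.d.\ uniform positions; since $r_n<1/4$, the $r_n$-ball on the torus has volume exactly $\phi_d r_n^d=\mu_n/n$, so $Z_i^{K_n}\sim\mathrm{Bin}(k_n-1,\mu_n/n)$ for each $i\le k_n$, and a Chernoff bound plus a union bound over the $k_n$ vertices give: (i) if $k_n^2\mu_n=o(n)$ then $G(n,r_n)[K_n]$ is edgeless w.h.p., so $B_n=0$; and (ii) for any fixed $\eta'>0$, $B_n\le(1+\eta')k_n\mu_n/n+O_{\eta'}(\log n)$ w.h.p.\ in general, which, using $\mu_n/n=\phi_d r_n^d<2^{-d}\le\tfrac12$, yields $k_n-B_n=\Omega(k_n)$. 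The theorem then follows by a short case split on $k_n$. In the sparse/connectivity regime: if $k_n=O(\log n)$ (which covers $k_n=O(T(n)-t(n))$, since $T(n)-t(n)=O(\log n)$), then $k_n^2\mu_n=O((\log n)^3)=o(n)$, so $B_n=0$ by (i), and the required inequality is exactly the hypothesis together with $T(n)-t(n)\to\infty$; if $k_n=\omega(\log n)$, then by (ii) $B_n=o(k_n)$ (both terms are $o(k_n)$), so $k_n-B_n=(1-o(1))k_n\gg\log n\ge T(n)\ge T(n)-t(n)$ and the inequality holds automatically. The dense regime is the same but easier: $k_n>\varepsilon\mu_n$ already forces $k_n\gg\log n$, so (ii) alone gives $k_n-B_n=\Omega(k_n)=\Omega(\mu_n)\gg o(\mu_n)$. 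The two sub-cases are patched by the usual subsequence argument.

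The conceptual content here is immediate — planting adds $k_n-1$ to clique degrees and nothing to the rest — so the real work is the bookkeeping, and I expect two points to require care. The first is the verification that $T(n)-t(n)\asymp T(n)$ in the sparse/connectivity regime, which is what lets the $(1+o_p(1))$ fluctuations of $\Delta_n,\delta_n$ be absorbed into the $\varepsilon$-slack of the hypothesis rather than competing with the genuine gap $T(n)-t(n)$ (this is also why the dense threshold must be stated merely as $\varepsilon\mu_n$: there $T-t=o(\mu_n)$ and cannot be pinned down). The second is showing $B_n$ is negligible across all regimes, which forces the case split so that a single statement covers both microscopic planted cliques — where $G(n,r_n)[K_n]$ is edgeless, so $B_n=0$ — and macroscopic ones, where $B_n$ may be of order $\mu_n$ and one has to exploit $\phi_d r_n^d<2^{-d}$ to keep $k_n-B_n$ of order $k_n$. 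The control of $B_n$, being the only genuinely probabilistic step, is the main obstacle.
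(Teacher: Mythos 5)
Your proposal is correct and follows essentially the same route as the paper: reduce exact recovery to the event $\min_{i\in K}Z_i > \max_{i\in V\setminus K}Z_i$ and plug in the extremal-degree asymptotics \eqref{eq:maximum_RGG_threshold}--\eqref{eq:minimum_RGG_threshold}. The only real difference is bookkeeping: the paper writes the post-planting clique degree directly as $(k-1)+Z_i^{V\setminus K}\ge (k-1)+\delta_{n-k}$, which sidesteps your term $B_n=\max_{i\in K}Z_i^{K}$ entirely and makes your Chernoff bound and case split on $k_n$ unnecessary.
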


The idea of the proof is quite simple. The output $\hat{K}_n^{\text{(VD)}}$ of Algorithm~\ref{algo:VD-algorithm} is the set of $k_n$ vertices with largest degree in $G_{k_n}(n,r_n)$. Therefore, we can quantify the probability for the algorithm to succeed by:
\begin{equation}
    \Prob{\hat{K}_n^{\text{(VD)}} = K} = \Prob{\min_{i \in K_n} Z_i > \max_{i \in V \setminus K_n} Z_i}. \nonumber
\end{equation}
In other words, for the VD-algorithm to correctly identify the clique, every clique vertex must have a degree strictly greater than any non-clique vertex. This occurs when the minimum degree within the clique, approximately $k_n + \delta_n$, exceeds the maximum degree outside it, $\Delta_n$. The resulting success condition, $k_n > \Delta_n - \delta_n$, follows directly from the asymptotic behavior of $\Delta_n$ and $\delta_n$ in \eqref{eq:maximum_RGG_threshold} and \eqref{eq:minimum_RGG_threshold}.

We do not have a complete result for the VD-algorithm, in that we cannot conclude that the algorithm fails or achieves only partial recovery in the entire region of the $\mu$--$k$ plane. Nonetheless, we can provide some negative results.

\begin{theorem}[VD-algorithm -- negative result]\label{thm:negative_VD_algo}
Suppose either:
\begin{enumerate}
    \item[(i)] $\lim_{n \to \infty} \mu_n / \log(n) = \alpha \in (0,\infty)$ and $k_n \le (1-\varepsilon) (T(n) - \mu_n)$, for $0<\varepsilon<1$;
    \item[(ii)] $\lim_{n \to \infty} \mu_n / \log(n) = \infty$, with $\mu_n=o(n)$ and $k_n=o(\sqrt{\mu_n})$.
\end{enumerate}
Then the VD-algorithm fails to recover the planted clique in $G_{k_n}(n, r_n)$ with high probability. 
\end{theorem}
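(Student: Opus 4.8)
The plan is to show that in each of the two regimes, the planted clique $K_n$ cannot coincide with the top-$k_n$ degree set, because there exist (many) vertices outside $K_n$ whose degree already exceeds the degree that a clique vertex can attain. A clique vertex $i \in K_n$ has degree $Z_i = Z_i^{V \setminus K_n} + (k_n - 1)$, where $Z_i^{V\setminus K_n}$ is its number of neighbours outside the clique, which is stochastically at most a fresh copy of an ordinary RGG-degree (in fact slightly less, since the $X_i$ for $i \in K_n$ are conditioned, but this conditioning is negligible as $k_n = o(n)$). Hence, with high probability, $\max_{i \in K_n} Z_i \le \Delta_n^{(0)} + k_n$, where $\Delta_n^{(0)}$ denotes a maximum degree computed over a slightly smaller vertex set; more importantly, the \emph{typical} clique vertex has degree close to $\mu_n + k_n$. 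On the other side, I need a lower bound on $\max_{i \in V \setminus K_n} Z_i$, i.e.\ on the maximum degree of the ambient RGG restricted to non-clique vertices, which by \eqref{eq:maximum_RGG_threshold} is $T(n)(1+o(1))$.

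For part (i), where $\alpha \in (0,\infty)$, the argument is essentially a counting/anticoncentration statement. The planted clique contributes $k_n - 1 \le (1-\varepsilon)(T(n) - \mu_n)$ extra edges to each clique vertex, so $\max_{i \in K_n} Z_i \le \mu_n(1+o(1)) + (1-\varepsilon)(T(n)-\mu_n) + (\text{fluctuation of }Z_i^{V\setminus K_n})$. The key point is that the fluctuations of an individual RGG-degree are only of order $\sqrt{\mu_n} = o(T(n) - \mu_n)$ in this regime (since $T(n) - \mu_n = \mu_n(H_+^{-1}(\alpha^{-1}) - 1) \asymp \mu_n \asymp \log n \gg \sqrt{\log n}$), so no clique vertex can exceed $T(n) - \tfrac{\varepsilon}{2}(T(n)-\mu_n)$ with high probability — a union bound over $k_n \le n$ clique vertices using a Chernoff bound for the degree (a sum of indicators with a bounded dependency structure, or simply a Poisson-type tail) suffices. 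Meanwhile, by \eqref{eq:maximum_RGG_threshold} applied to the RGG on $V \setminus K_n$ (which has $N_n - k_n = n(1+o(1))$ vertices and the same radius), there is at least one non-clique vertex with degree $\ge (1-o(1))T(n)$. Comparing the two bounds gives a strict gap, so with high probability some non-clique vertex outranks some clique vertex, and the VD-algorithm fails.

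For part (ii), where $\mu_n / \log n \to \infty$ and $\mu_n = o(n)$, the relevant fact is that the ambient maximum degree $\Delta_n$ still \emph{strictly} exceeds the typical degree $\mu_n$ by an amount that dwarfs $k_n = o(\sqrt{\mu_n})$. Quantitatively, I would establish (or cite from \cite{penrose2003random, mcdiarmid2003random}, via the same large-deviations machinery behind \eqref{eq:maximum_RGG_threshold}) that when $\log n = o(\mu_n) = o(n)$ one has $\Delta_n - \mu_n \asymp \sqrt{\mu_n \log n}$ with high probability (Gaussian-type fluctuations around the mean for the extreme of roughly $n/\mu_n$ "independent-ish" high-degree candidates, or more directly: the maximum over $\Theta(n)$ weakly dependent near-Poisson$(\mu_n)$ random variables is $\mu_n + \Theta(\sqrt{\mu_n \log n})$). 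Since $\sqrt{\mu_n \log n} \gg \sqrt{\mu_n} \gg k_n$, the planted clique only boosts each clique vertex's degree to about $\mu_n + k_n = \mu_n + o(\sqrt{\mu_n}) \ll \mu_n + \sqrt{\mu_n \log n}$, so again the top non-clique vertex beats the clique vertices and the algorithm fails. A symmetric lower-bound argument (there genuinely exist non-clique vertices of degree $\ge \mu_n + (1-o(1))\sqrt{2\mu_n \log n}$, by a second-moment / Poisson-approximation count of vertices achieving a prescribed large degree) makes this rigorous.

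The main obstacle I anticipate is controlling the extreme-value behaviour of RGG degrees to the required precision in part (ii): \eqref{eq:maximum_RGG_threshold} only gives $\Delta_n \sim T(n) = \mu_n H_+^{-1}(\alpha^{-1})$, and in the dense regime $\alpha = \infty$ this reads merely $\Delta_n \sim \mu_n$, which is \emph{not} sharp enough — I need the second-order term $\Delta_n - \mu_n \asymp \sqrt{\mu_n \log n}$. Extracting this either requires reopening the large-deviations/Poisson-approximation arguments of \cite{penrose2003random} at finer resolution, or constructing an explicit lower bound by exhibiting a region of the torus that is anomalously dense (a first/second moment argument on the number of vertices with $\ge \mu_n + c\sqrt{\mu_n \log n}$ neighbours, handling the spatial dependence between overlapping neighbourhoods). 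A secondary, more routine technical point is justifying that conditioning on the positions of the $k_n$ clique vertices and on $N_n = n(1+o(1))$ perturbs none of these estimates, which follows from $k_n = o(n)$ and standard Poissonization/de-Poissonization as already sketched in the excerpt.
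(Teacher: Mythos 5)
Your treatment of part (i) is sound and close in spirit to the paper's: the paper likewise lower-bounds $\max_{i\in V\setminus K}Z_i$ by $T(n)(1+o_{\mathbb P}(1))$ (via Lemma~\ref{lem:max}, which observes that the maximizer of the degree in $G(n,r)$ lies outside $K$ with probability $1-k_n/n\to1$), and then shows via a Chernoff/Poisson tail bound that clique vertices cannot have $Z_i^{V\setminus K}$ exceeding $T(n)-k_n\ge \mu_n+\varepsilon(T(n)-\mu_n)$. The only structural difference is that you union-bound over the clique vertices to control $\max_{i\in K}Z_i$, whereas the paper uses asymptotic independence of the clique degrees (Lemma~\ref{lem:independent_vertices_K}) to show that at least one clique vertex falls below the threshold; both routes work here because $k_n=O(\log n)$ in regime (i), so your ``union bound over $k_n\le n$ vertices'' is harmless even though, as stated, it sounds too generous.

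Part (ii) is where the genuine gap lies, and you have correctly identified it yourself: everything hinges on a second-order lower bound $\Delta_n-\mu_n\gg\sqrt{\mu_n}$ that \eqref{eq:maximum_RGG_threshold} does not provide, and your proposal asserts $\Delta_n-\mu_n\asymp\sqrt{\mu_n\log n}$ without proof. Two remarks. First, that specific scale is doubtful at the top of the regime: when $\mu_n$ is close to $n$ (say $\mu_n=n/\log\log n$) the torus only accommodates $\Theta(n/\mu_n)$ disjoint $r$-balls, and the natural lower bound one obtains is $\mu_n+c\sqrt{\mu_n\log(n/\mu_n)}$ rather than $\mu_n+c\sqrt{\mu_n\log n}$; fortunately $\log(n/\mu_n)\to\infty$ under $\mu_n=o(n)$, so the weaker bound still dominates $k_n=o(\sqrt{\mu_n})$. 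Second, and more importantly, the paper shows that no extreme-value asymptotics are needed at all: it tessellates the torus into a \emph{fixed} number $M^d$ of cubes, finds (w.h.p.) in each cube a non-clique vertex whose $r$-ball is contained in that cube, so that these $M^d$ degrees are i.i.d.\ $\mathrm{Poisson}(\mu_n)$, and then computes directly that a single $\mathrm{Poisson}(\mu_n)+k_n$ variable (which stochastically dominates $\min_{i\in K}Z_i$) exceeds all of them with probability tending to $1/(M^d+1)$, since the shift $k_n=o(\sqrt{\mu_n})$ is negligible on the Gaussian scale $\sqrt{\mu_n}$; taking $M^d=1/\varepsilon-1$ makes the success probability at most $\varepsilon$. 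Your fallback idea of ``exhibiting anomalously dense disjoint regions'' is exactly this construction, and if you let $M=M_n\to\infty$ slowly you would both close your gap and obtain the stronger conclusion that failure occurs with probability tending to $1$; but as written the proposal defers the decisive estimate rather than proving it.
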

Theorem \ref{thm:negative_VD_algo} explains that small planted cliques cannot be recovered from solely ordering vertex degrees in the connectivity and super-connectivity regime. In the next section, we show that the CN-algorithm is more powerful in this sense.

The proofs of Theorems~\ref{thm:successVDalgo} and \ref{thm:negative_VD_algo} are given in Section \ref{sec:VD_proof}.

\subsection{CN-algorithm}

The output $\hat{K}_n^{\text{(CN)}}$ of Algorithm~\ref{algo:CN-algorithm} exactly matches the planted clique $K_n$ when: (a) there exists at least one edge $(i,j)$ of the planted clique such that $|Z_{ij}| = k_n-2$ and $Z_{ij}$ forms a clique; {\it and} (b) for all other edges $(i,j)$, either $|Z_{ij}| \not = k_n-2$ or $Z_{ij}$ does not form a clique. Thus, it is possible to explicitly compute the success probability of the CN-algorithm by analysing these two conditions separately.
The next theorem summarizes the regimes under which the CN-algorithm works with high probability for the graph density $\mu_n$ and the clique size $k_n$.

\begin{theorem}[CN algorithm -- positive result]\label{thm:successCNalgo}
The CN-algorithm succeeds with high probability if 
\begin{equation}\label{eq:cond2successCN}
    k_n\le \alpha n, \alpha \in (0,1), \quad \mu_n n e^{- c_{1,d} \mu_n} =o(1)
\end{equation}
or
\begin{equation}\label{eq:cond3successCN}  
    \begin{cases}
    \displaystyle \frac{\mu_n n}{2} \frac{(c_{2,d} \mu_n)^{k_n-2} e^{- c_{2,d} \mu_n}}{(k-2)!} =o(1) &\text{if $k_n-2 \leq c_{2,d} \mu_n$}, \\
    \displaystyle \frac{n}{2} \frac{\mu_n^{k_n-1} e^{- \mu_n}}{(k_n-2)!} = o(1) &\text{if $k_n-2 \geq \mu_n$ and $k_n = o(n/\mu_n)$}, 
    \end{cases}
\end{equation}
for some constants $c_{1,d} >0, c_{2,d}>0$. 
\end{theorem}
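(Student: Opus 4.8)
The plan is to use the decomposition announced just before the statement. Write $\mathcal A$ for the event that some planted edge $(i,j)$ has $|Z_{ij}|=k_n-2$ (in which case necessarily $Z_{ij}=K_n\setminus\{i,j\}$, which is a clique), and $\mathcal B$ for the event that no edge $(i,j)$ with at least one endpoint outside $K_n$ has both $|Z_{ij}|=k_n-2$ and $Z_{ij}$ a clique. On $\mathcal A\cap\mathcal B$ the CN-algorithm returns exactly $K_n$: every planted edge is either skipped (if $|Z_{ij}|\neq k_n-2$) or outputs $K_n$, event $\mathcal B$ forbids any other edge from producing an output, and event $\mathcal A$ guarantees that at least one planted edge does produce one. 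Hence $\Prob{\hat K_n^{\text{(CN)}}=K_n}\ge 1-\Prob{\mathcal A^c}-\Prob{\mathcal B^c}$, and it is enough to show $\Prob{\mathcal A^c}\to0$ and $\Prob{\mathcal B^c}\to0$ under \eqref{eq:cond2successCN}--\eqref{eq:cond3successCN}.

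For $\mathcal A$: conditioning on the i.i.d.\ uniform positions of the planted vertices, a planted edge $(i,j)$ violates the requirement in $\mathcal A$ exactly when some non-planted vertex lies in the lens $L_{ij}:=B(X_i,r_n)\cap B(X_j,r_n)$; in particular this cannot happen when $d_T(X_i,X_j)>2r_n$. So it suffices to know that, with high probability, at least one planted pair is at toroidal distance larger than $2r_n$. When $\mu_n=o(n)$ we have $r_n\to0$, so even a single pair already achieves this with probability $1-O(\mu_n/n)$; when $k_n\to\infty$, the diameter of $K_n$ exceeds $2r_n$ with high probability because $2r_n<1/2$ stays bounded away from the diameter of the torus. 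The only remaining regime, $k_n=O(1)$ with $\mu_n\asymp n$, is precisely the ill-posed case excluded in Section~\ref{sec:problem_description}. Thus $\Prob{\mathcal A^c}=o(1)$ in all cases.

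For $\mathcal B$, I will use a union bound over edges. Fix an edge $(i,j)$ with at least one endpoint outside $K_n$; this is a geometric edge, so condition on $\rho:=d_T(X_i,X_j)\in(0,r_n]$. When both endpoints lie outside $K_n$, $Z_{ij}$ is exactly the set of vertices lying in $L_{ij}$, so $|Z_{ij}|$ is Poisson with mean $\mu_n g(\rho)$, where $g(\rho):=\operatorname{vol}(L_{ij})/(\phi_d r_n^d)$ decreases from $g(0)=1$ to $g(r_n)=:c_{2,d}\in(0,1)$, a dimensional constant. I bound $\Prob{|Z_{ij}|=k_n-2,\ Z_{ij}\text{ a clique}\mid\rho}$ in two ways. (i) Since $\rho\le r_n$ the lens has diameter at least $\sqrt3\,r_n>r_n$, hence contains two regions $R_1,R_2$, each of volume at least $c_{1,d}r_n^d$ for a dimensional constant $c_{1,d}>0$ (uniformly over $\rho\le r_n$), such that no point of $R_1$ is adjacent to any point of $R_2$; if $Z_{ij}$ contains non-planted vertices in both $R_1$ and $R_2$ it is not a clique, so the probability above is at most $2e^{-c_{1,d}\mu_n(1+o(1))}$. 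Summing over the $\sim n\mu_n/2$ such edges (and the $O(k_n\mu_n)$ edges incident to $K_n$, treated the same way and of lower order) yields $\Prob{\mathcal B^c}\le n\mu_n e^{-c_{1,d}\mu_n}(1+o(1))=o(1)$, which is \eqref{eq:cond2successCN}. (ii) Dropping the clique requirement, $\Prob{|Z_{ij}|=k_n-2\mid\rho}$ equals the Poisson$(\mu_n g(\rho))$ mass at $k_n-2$, which as a function of the mean is unimodal with mode $k_n-2$; if $k_n-2\le c_{2,d}\mu_n$ then $\mu_n g(\rho)\ge c_{2,d}\mu_n\ge k_n-2$ for all $\rho\le r_n$ and the mass is largest at $\rho=r_n$, while if $k_n-2\ge\mu_n\ge\mu_n g(\rho)$ it is largest as $\rho\to0$. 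Plugging these extreme values into the union bound over $\sim n\mu_n/2$ edges (with the $K_n$-incident edges absorbed using $k_n=o(n/\mu_n)$, resp.\ $k_n\le\alpha n$) produces exactly the two lines of \eqref{eq:cond3successCN}.

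The main obstacle is the geometry inside the lens. One must produce the dimensional constants $c_{1,d}$ and $c_{2,d}=g(r_n)$ \emph{uniformly} over all admissible edge lengths $\rho\in(0,r_n]$: that the lens always splits off two positive-volume, mutually non-adjacent caps (this is where the bound $\rho\le r_n$, forcing lens diameter $\ge\sqrt3\,r_n$, is essential), and that it always fills a fixed fraction of a ball. One then has to keep careful track of the planted vertices that may themselves fall inside $L_{ij}$ — their mutual adjacency being automatic — so that neither the exponential factor $e^{-c_{1,d}\mu_n}$ nor the factorial $(k_n-2)!$ is lost, and to check, as indicated above, that the edges incident to $K_n$ never dominate. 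The Poisson-process and toroidal-boundary bookkeeping behind the per-edge estimates is routine but must be done with some care.
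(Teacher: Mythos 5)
Your proposal is correct and follows essentially the same route as the paper: the same decomposition into the event that every planted edge is spoiled by an outside common neighbour (controlled via the distance-$2r_n$ geometry, split by $k_n=O(1)$ versus $k_n\to\infty$) and the event that a non-clique edge triggers the algorithm, with the latter bounded by the minimum of the two-non-adjacent-caps estimate and the Poisson mass-at-$k_n-2$ estimate, exactly as in Lemmas~\ref{prop:CNfailA}--\ref{prop:CNfailB2new}. The points you flag as remaining bookkeeping (uniformity of $c_{1,d},c_{2,d}$ over $\rho$, planted vertices inside the lens, the random edge count handled in the paper by Stein--Chen, and the $\mu_n\asymp n$ case handled there by a tessellation) are precisely what the paper's lemmas carry out.
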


To get an explicit description of the region where $\hat{K}_n^{\text{(CN)}}$ exactly matches $K_n$, the inequalities in \eqref{eq:cond2successCN} can be solved using the Lambert $W$ function. In particular, it is easily checked that $\mu_n n e^{-c_{1,d} \mu_n} = o(1)$ if and only if $\mu_n \gg -W_{-1}(-c_{1,d}/n)/c_{1,d} = (\log n + \log \log n)/c_{1,d} + O(1)$, using the asymptotic expansion for the Lambert $W$ function (see \cite{corless1996lambert}). In particular, the CN-algorithm correctly identifies a planted clique of any size if $\mu_n$ is large enough, but sublinear in the graph size $n$.

We observe that the CN-algorithm is effective in certain special cases. Importantly, it recovers small cliques. Indeed, take $k=2$ (planted edge) and $\mu_n \ge (1+\epsilon)\log(n)$. This corresponds to the case $k_n-2\le c_{2,d}\mu_n$  in \eqref{eq:cond3successCN}, and it is easy to check that the condition is satisfied. This applies to any finite $k_n$. This is in contrast to the VD-algorithm. We also note that the CN-algorithm recovers large cliques in the sparse regime and the connectivity regime; it is sufficient that  $k_n\log k_n\gg \log (n)$ and $k_n \gg \mu_n$.\\

\begin{figure}
    \begin{subfigure}[t]{0.48\linewidth}
        \includegraphics[width=\linewidth]{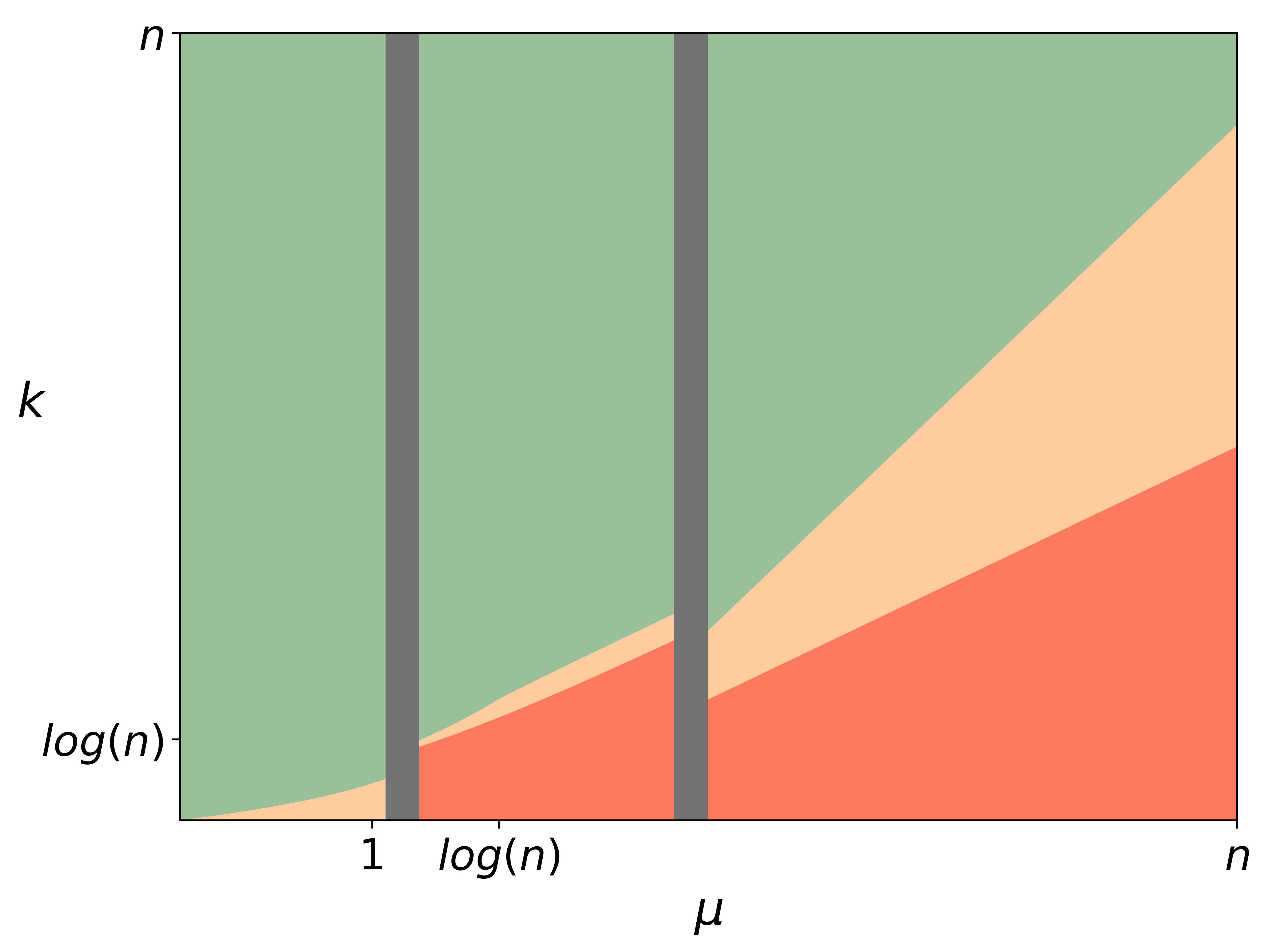}
        \caption{VD-algorithm}
        \label{fig:success_VD_algo}
    \end{subfigure}    
    \hfill
    \begin{subfigure}[t]{0.48\linewidth}
        \includegraphics[width=\linewidth]{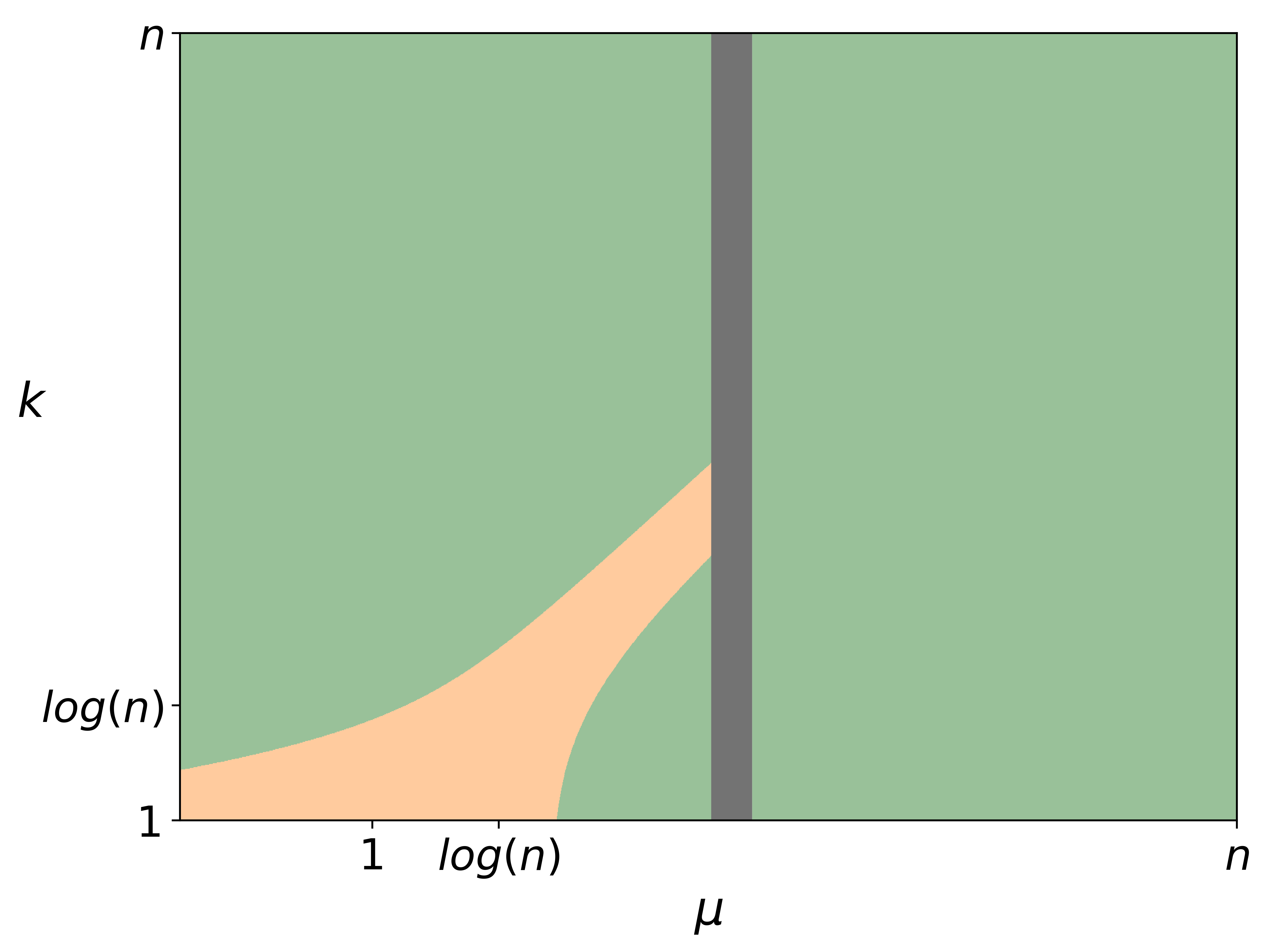}
        \caption{CN-algorithm}
        \label{fig:success_CN_algo}
    \end{subfigure}
    \vspace{8pt}
    \caption{Qualitative representation of recovery results, in the $\mu$--$k$ plane, according to Theorems \ref{thm:successVDalgo}, \ref{thm:negative_VD_algo} and \ref{thm:successCNalgo}. The pictures are shown for $n=10^9$, on a log-log scale. Green areas: exact recovery of the planted clique. Red areas: VD algorithm fails. Beige area: no theoretical guarantees are currently available. Gray vertical bands: boundaries between the sparse, connectivity, and dense regimes of geometric random graphs. The gray vertical bands separate different regimes of the average degree $\mu$: in Figure~\ref{thm:successVDalgo}, the gray band separates the two regions where condition \eqref{eq:cond2successCN} is satisfied or not.}
    \label{fig:success_comparison}
\end{figure}

Figure \ref{fig:success_comparison} gives a qualitative picture of the region in the $\mu$--$k$ plane where the VD- and CN-algorithms succeed or fail with high probability. Green areas indicate parameter ranges where $\hat{K}^{\text{(VD)}}$ and $\hat{K}^{\text{(CN)}}$ exactly recover the planted clique $K$ in random geometric graphs. The red region marks where the VD-algorithm fails. Beige regions correspond to parameter ranges for which no theoretical guarantees are currently available. The gray vertical bands separate different regimes of the average degree $\mu$: in Figure~\ref{thm:successVDalgo}, they distinguish the sparse, connectivity, and dense regimes of geometric random graphs; in Figure~\ref{fig:success_CN_algo}, they indicate the two regions where condition \eqref{eq:cond2successCN} is satisfied or not.

\section{Numerical experiments}\label{sec:numerical_experiments}

From a theoretical standpoint, Theorems \ref{thm:successVDalgo}, \ref{thm:negative_VD_algo}, and \ref{thm:successCNalgo} show that asymptotically the CN-algorithm recovers the planted clique over a wider range of random geometric graph parameters than the VD-algorithm. In particular, as the average degree $\mu_n$ increases, the VD-algorithm fails to recover small planted cliques (see Figure \ref{fig:success_VD_algo}). By contrast, Figure \ref{fig:success_CN_algo} shows that the CN-algorithm can successfully recover planted cliques of any size order (including constant size) as $\mu_n$ grows. Below, we demonstrate numerically that the common-neighbours approach outperforms vertex-degree ordering not only asymptotically (as already proved) but also on finite-size samples of $G_k(n,r)$, making it a practical method for finite graphs.

\begin{figure}
\vspace{10pt}\centering
\begin{subfigure}{0.6\linewidth}
\centering
\includegraphics[width=\linewidth]{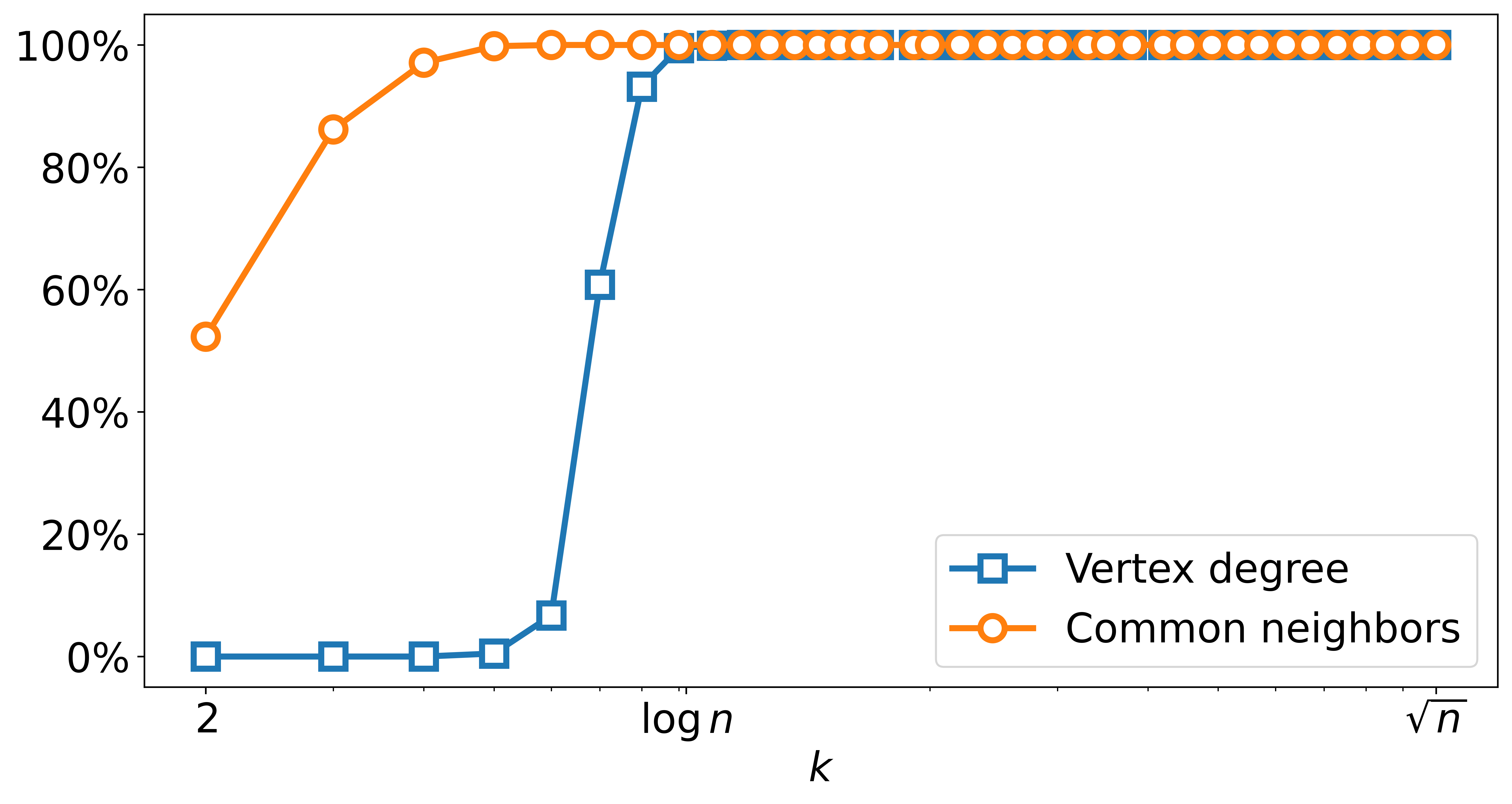}
\caption{$\mu = 1$}\vspace{10pt}
\label{fig:compare_mu1}
\end{subfigure}
\begin{subfigure}{0.6\linewidth}
\centering
\includegraphics[width=\linewidth]{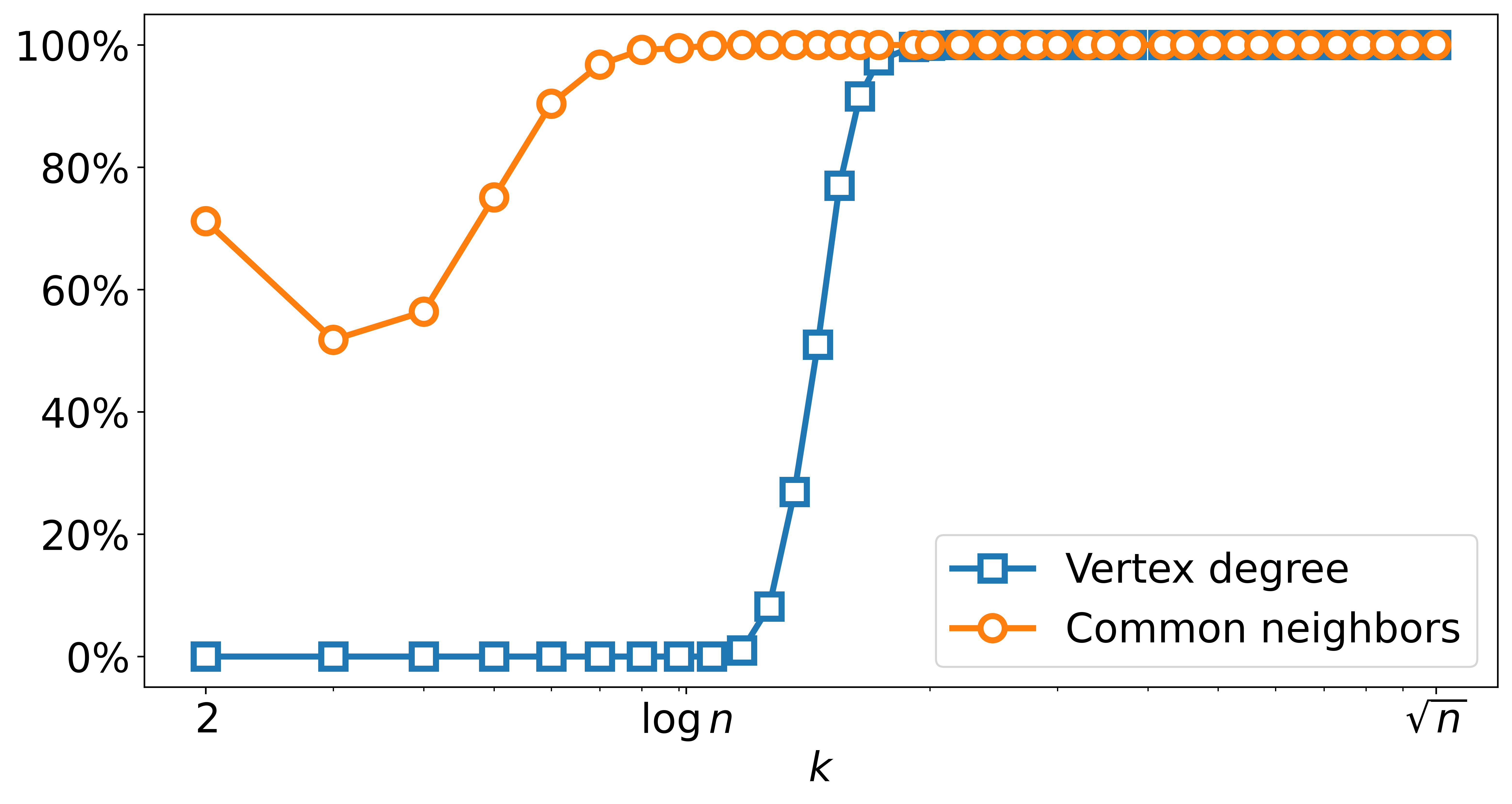}
\caption{$\mu = 5$}\vspace{10pt}
\label{fig:compare_mu5}
\end{subfigure}
\begin{subfigure}{0.6\linewidth}
\centering
\includegraphics[width=\linewidth]{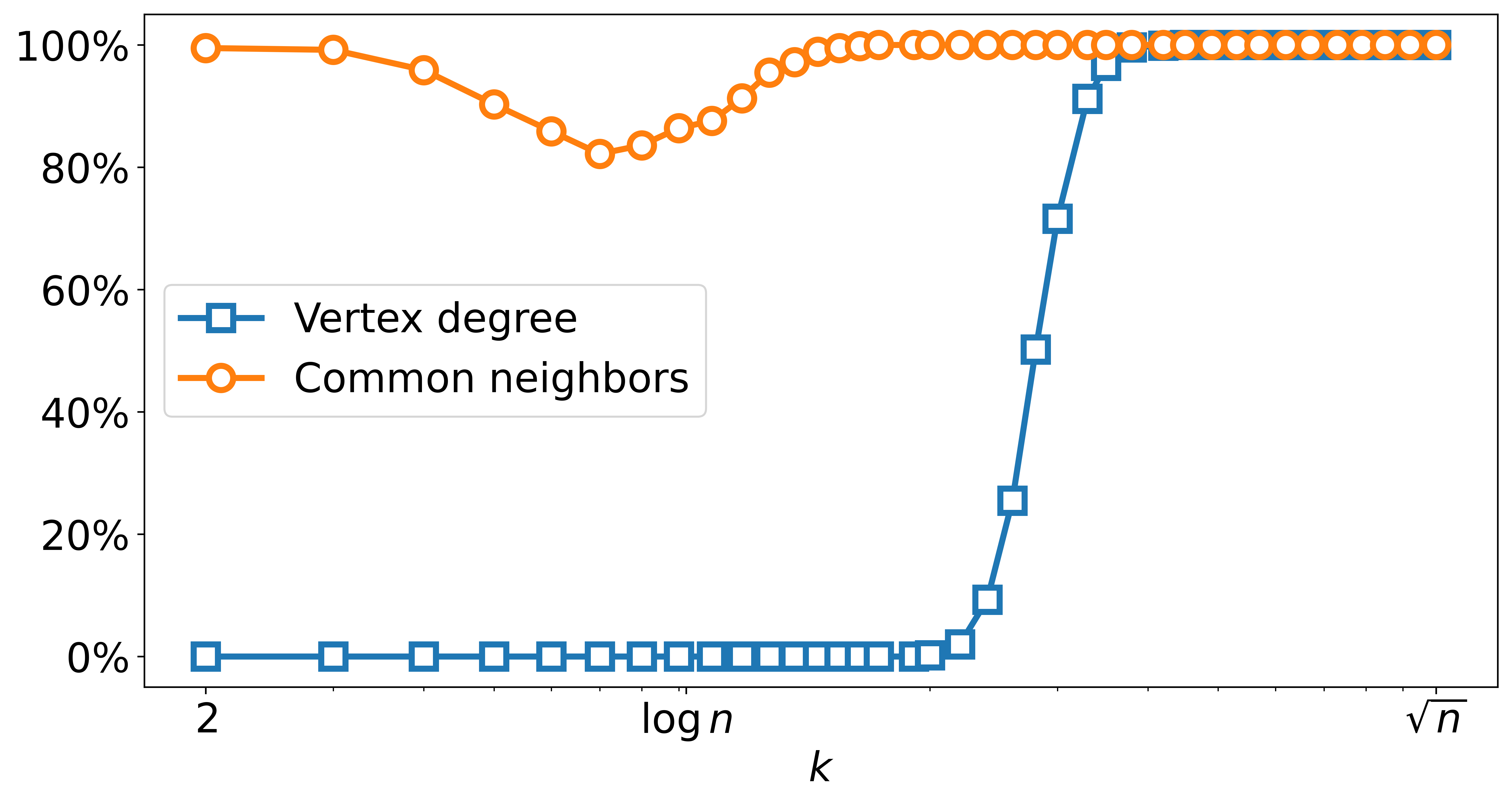}
\caption{$\mu = 20$}\vspace{10pt}
\label{fig:compare_mu20}
\end{subfigure}
\vspace{8pt}
\caption{Percentage of runs (out of 1000) in which the VD- and CN-algorithms returned $K$ correctly for $G_{k}(n,r)$ with $n = 10^4$, $d=2$.}
\label{fig:compare}
\end{figure}

Figure \ref{fig:compare} reports the success rate (over 1000 independent samples of $G_k(n,r)$) of the VD- and CN-algorithms as the planted clique size $k$ increases, for three values of the average degree: $\mu=1$ (Figure \ref{fig:compare_mu1}), $\mu=5$ (Figure \ref{fig:compare_mu5}), and $\mu=20$ (Figure \ref{fig:compare_mu20}). For reference we used $n=10^4$, so $\log(n)\approx 9.21$. The observed behavior agrees with our theoretical predictions:
\begin{itemize}
\item The VD-algorithm performs well for small average degree but deteriorates as $\mu$ grows: it requires the planted clique to be sufficiently large to be identified correctly.
\item The CN-algorithm achieves recovery on almost the entire tested parameter range; for larger $\mu_n$, it recovers any planted clique size (even planted edges).
\item In all settings tested, the CN-algorithm outperforms the VD-algorithm.
\end{itemize}

\section{Proofs for the VD-algorithm}\label{sec:VD_proof}
For simplicity of notation, from now on, we will write $\mu = \mu_n$, $k = k_n$, and $r = r_n$ since it is clear that all these quantities scale with the size $n$.

\begin{proof}[Proof of Theorem \ref{thm:successVDalgo}]
Our aim is to find under which regimes for $\mu$ and $k$ in $G_k(n,r)$
\begin{equation}\label{eq:condition_VD_algo}
    \lim_{n \to \infty} \Prob{\min_{i \in K} Z_i > \max_{i \in V \setminus K} Z_i} = 1.
\end{equation}
First, observe that 
\[
\max_{i \in V \setminus K} Z_i\le \Delta_n.
\]
Moreover, for each clique vertex $i \in K$, we have $Z_i = (k-1) + Z_i^{V \setminus K}$, thus 
\[
\min_{i \in K} Z_i \ge k - 1 + \min_{i \in K} Z_i^{V \setminus K} \ge k - 1 + \delta_{n-k}.
\]
Then, condition \eqref{eq:condition_VD_algo} is verified if $k > \Delta_n - \delta_{n-k} + 1$ holds with high probability, in the large $n$ limit.
At this point, we can examine the cases separately for finite and infinite $\alpha$. If $\alpha \in [0, \infty)$ (connectivity or sub-connectivity regimes), observe that 
\[\frac{\Delta_n - \delta_{n-k}}{T(n) - t(n-k)} \stackrel{p}{\rightarrow} 1.\]
In particular: if $k = o(n)$, then $t(n) = (1+o(1))t(n-k)$ and condition \eqref{eq:condition_VD_algo} is verified as long as $k > (1+\varepsilon) (T(n) - t(n))$, for some $\varepsilon > 0$; if $k = \Theta(n)$, condition \eqref{eq:condition_VD_algo} is trivially satisfied.
Instead, when $\alpha = \infty$ we have $H_{-}^{-1}(\alpha^{-1}) = H_{+}^{-1}(\alpha^{-1}) = 1$, thus
\[\lim_{n \to \infty} \frac{\Delta_n - \delta_{n-k}}{\mu} = 0, \quad \text{ in probability}.\]
Therefore, \eqref{eq:condition_VD_algo} is achieved by choosing $k = \varepsilon \mu$, for any $\varepsilon >0$.

Consequently, as soon as $k > (1+\varepsilon) T(n)$, $\min_{i \in K} Z_i > \max_{i \in V \setminus K} Z_i$ with high probability, and the VD-algorithm correctly recovers the planted clique.
\end{proof}

Interestingly, despite a simple proof, it is not easy to relax the conditions of Theorem~\ref{thm:successVDalgo}. Indeed, consider $k=o(n)$. Then the following lemma holds. 
\begin{lemma}\label{lem:max}
Consider $G_k(n,r)$, with $k=o(n)$. Then 
\begin{equation} \nonumber
    \label{eq:maxV-K}
    \lim_{n\to\infty}\Prob{\max_{i\in V\setminus K}Z_i=\Delta_n}=1.
\end{equation}
where $\Delta_n$ denotes the maximum degree in $G(n,r)$.
\end{lemma}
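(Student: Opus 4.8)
The plan rests on one simple structural observation: planting a clique on $K$ only adds edges between pairs of vertices that both lie in $K$, so no edge incident to a vertex of $V\setminus K$ is ever created. Consequently, for every $i\in V\setminus K$ the degree $Z_i$ in $G_k(n,r)$ equals the degree of $i$ in the original graph $G(n,r)$, and in particular $\max_{i\in V\setminus K}Z_i\le\Delta_n$ always holds. Thus the event in \eqref{eq:maxV-K} fails exactly when \emph{every} vertex realizing the maximum degree $\Delta_n$ in $G(n,r)$ has been selected into the planted set $K$, and the whole proof reduces to showing that this event has vanishing probability.

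First I would fix notation: let $S_n:=\{i\in V:\ Z_i=\Delta_n\}$ be the (random, always nonempty) set of maximum-degree vertices of $G(n,r)$, and recall that $K$ is a uniformly random $k$-subset of $V$ chosen independently of $G(n,r)$. Conditioning on the realization of $G(n,r)$ — equivalently, on $N_n=|V|$ and on the set $S_n$ — and using that $K$ is a uniform $k$-subset, one has, whenever $|S_n|\le k$,
\[
\Prob{S_n\subseteq K \mid G(n,r)} \;=\; \frac{\binom{N_n-|S_n|}{k-|S_n|}}{\binom{N_n}{k}} \;=\; \prod_{j=0}^{|S_n|-1}\frac{k-j}{N_n-j}\;\le\; \frac{k}{N_n},
\]
where the last inequality discards all factors after the first, each of which is $\le 1$ on the event $\{N_n\ge k\}$; when $|S_n|>k$ the conditional probability is simply $0$. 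Hence $\Prob{S_n\subseteq K \mid G(n,r)}\le k/N_n$ on $\{N_n\ge k\}$, uniformly in the realization of $G(n,r)$.

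The remaining step is to remove the conditioning, which I would do with the concentration of the Poisson count already used in Section \ref{sec:algorithm_analysis}: with high probability $N_n>n-n^{2/3}$, and on that event $k/N_n\le k/(n-n^{2/3})=(1+o(1))k/n=o(1)$ since $k=o(n)$. Taking expectations,
\[
\Prob{S_n\subseteq K}\;\le\;\Prob{N_n\le n-n^{2/3}}+\frac{k}{n-n^{2/3}}\;\longrightarrow\;0,
\]
so that $\Prob{\max_{i\in V\setminus K}Z_i=\Delta_n}=1-\Prob{S_n\subseteq K}\to 1$, which is the claim.

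There is no real obstacle here: the content lies entirely in the structural observation of the first paragraph (degrees outside $K$ are untouched by the planting), after which the statement is just the elementary fact that a uniformly random $k$-subset with $k=o(n)$ asymptotically almost surely fails to contain a fixed nonempty vertex set. The only points needing a little care are stating precisely that $K$ is drawn uniformly and independently of $G(n,r)$, and Poissonization of $|V|$, both of which are dispatched above.
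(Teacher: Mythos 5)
Your proof is correct and follows essentially the same route as the paper's: the planted edges leave degrees outside $K$ unchanged, and a uniformly chosen $k$-subset with $k=o(n)$ asymptotically almost surely misses a maximum-degree vertex. The paper simply fixes one argmax vertex and notes it lies outside $K$ with probability $1-k/n$, whereas you treat the full tie-set $S_n$ and the Poissonization of $|V|$ explicitly, which is a slightly more careful writeup of the same argument.
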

\begin{proof} Let $G(n,r)$ be a random geometric graph and $G_k(n,r)$ its modified version where a clique is planted on $k$ randomly chosen vertices. Without loss of generality, assume that vertex~$1$ has maximal degree in $G(n,r)$. We have $1\in \{\text{argmax}_{i\in V\setminus K} Z_i\}$ if and only if $1 \not \in K$. This latter event is independent of $Z_1$ and occurs with probability $1-k/n$. Since $k=o(n)$, this probability goes to one as $n\to\infty$.
\end{proof}

From Lemma $\ref{lem:max}$ we infer that if for all clique vertices $i \in K$ the relation $Z_i^{V \setminus K} + k > \Delta_n$ holds with high probability, then the VD-algorithm works. Therefore, it is sufficient to show that $\min_{i\in K}|\{j\in N(i), j\notin K\}| + k > \Delta_n$. However, it is not immediate to compute this minimum as the degree of the clique vertices may be dependent. We can partially overcome these difficulties in the next Lemma~\ref{lem:independent_vertices_K}, showing that for sufficiently small $k$ the dependency vanishes. 

\begin{lemma}\label{lem:independent_vertices_K}
Let $k=o\left(\sqrt{\frac{n}{\mu}}\right)$. Then the degrees of the clique vertices are asymptotically independent. 
\end{lemma}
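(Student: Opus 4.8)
The plan is to show that, when $k = o(\sqrt{n/\mu})$, the positions $X_{i_1}, \dots, X_{i_k}$ of the clique vertices are, with high probability, pairwise at distance greater than $2r$ from each other, and that this geometric separation forces the neighbourhoods-outside-$K$ of distinct clique vertices to involve disjoint sets of potential neighbours, hence to be independent. First I would make precise what ``asymptotically independent'' means here: I will argue that the random vector $\bigl(Z_{i_1}^{V\setminus K}, \dots, Z_{i_k}^{V\setminus K}\bigr)$ is, with probability tending to $1$, equal in distribution (conditionally on the event below) to a vector of independent random variables, each distributed as the number of Poisson points falling in a ball of radius $r$ — i.e.\ each is $\mathrm{Poisson}(\mu)$ up to the usual negligible correction coming from $N_n$ being Poisson$(n)$ rather than exactly $n$.

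The key steps, in order, are as follows. \textbf{(1)} Fix the clique vertices and condition on their positions. For a fixed pair $i_a, i_b \in K$, the probability that $d_T(X_{i_a}, X_{i_b}) \le 2r$ is $\phi_d (2r)^d = 2^d \mu / n$ (up to boundary-free toroidal exactness, since $r < 1/4$). \textbf{(2)} Union-bound over the $\binom{k}{2}$ pairs: the probability that some two clique vertices lie within distance $2r$ is at most $\binom{k}{2} \cdot 2^d \mu/n \le 2^{d-1} k^2 \mu / n = o(1)$ precisely because $k = o(\sqrt{n/\mu})$. Call $\mathcal{E}$ the complementary event — all clique-vertex balls of radius $r$ are pairwise disjoint — so $\Prob{\mathcal E} \to 1$. \textbf{(3)} On $\mathcal E$, the balls $B(X_{i_a}, r)$ for $a = 1,\dots,k$ are disjoint regions of the torus; by the independence and spatial independence properties of the Poisson process (restrictions of a Poisson process to disjoint regions are independent), the counts of process points (other than the clique vertices themselves) in these disjoint balls are mutually independent. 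Since $Z_{i_a}^{V\setminus K}$ is exactly the number of non-clique vertices in $B(X_{i_a}, r)$, and the at most $k$ clique vertices that could additionally land in such a ball are excluded precisely on $\mathcal E$ (they don't, since the balls are disjoint and each contains only its own centre among clique vertices), we get that $Z_{i_1}^{V\setminus K}, \dots, Z_{i_k}^{V\setminus K}$ are conditionally independent given $\mathcal E$ and the clique positions. \textbf{(4)} Remove the conditioning on positions by noting the disjointness event has probability $1-o(1)$ uniformly, and absorb the difference between the conditional and unconditional laws into an $o(1)$ total-variation error.

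The main obstacle I anticipate is step \textbf{(3)}–\textbf{(4)}: stating the independence conclusion cleanly. The subtlety is that ``the clique vertices'' are themselves $k$ of the Poisson points, so conditioning on which points are in $K$ and on their positions is a conditioning on the point process that must be handled with care (e.g.\ via the Mecke equation or by the standard fact that a Poisson process conditioned on $k$ of its points being at prescribed locations is still, for the remaining points, a Poisson process of the same intensity on the complement). Once that conditioning is set up correctly, disjointness of the $k$ balls gives independence immediately, and the only remaining bookkeeping is the harmless replacement of $N_n$ by $n$ and the $o(1)$ error from $\Prob{\mathcal E^c}$; neither of these affects the asymptotic independence claim. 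I would also remark that the threshold $k = o(\sqrt{n/\mu})$ is exactly the birthday-problem scale at which pairwise ball-overlaps start to appear, so the condition is tight for this particular argument.
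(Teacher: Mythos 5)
Your proposal is correct and follows essentially the same route as the paper: reduce asymptotic independence to the event that the balls around the clique vertices are pairwise disjoint, show this event has probability $1-o(1)$ via a birthday-problem calculation under $k = o(\sqrt{n/\mu})$, and invoke independence of Poisson counts on disjoint regions. Your union bound over $\binom{k}{2}$ pairs is an equivalent substitute for the paper's sequential product $\prod_{l=1}^{k-1}(1 - l\phi_d r^d)$, and your explicit attention to the $2r$ separation and to the conditioning on the clique points' positions is, if anything, slightly more careful than the paper's sketch.
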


\begin{proof}
    The degrees of the clique vertices are independent if and only if there is no overlap between the balls of radius $r$ centered at the vertices. As in the birthday paradox,
\[
    \Prob{\mbox{balls centered at clique vertices do not overlap}}
    =\prod_{l=1}^{k-1}\left(1-l\phi_d r^d\right).
\]
Assuming $k^2r^d\to 0$, for all $n$ large enough we have $l\phi_d r^d\le 1$ for every $1\le l\le k-1$. Therefore, 
\[
    \Prob{\mbox{balls centered at clique vertices do not overlap}}
     \ge 1-\sum_{l=1}^{k-1} l\phi_d r^d
    =1-\phi_d r^d\frac{k(k-1)}{2}.  
\]
If $k^2r^d\to 0$, then $\phi_d r^d\frac{k(k-1)}{2} = o(1)$, thus the probability that the balls overlap tends to zero, and therefore the degrees of the clique vertices are asymptotically independent. The condition $k^2 r^d \to 0$ is equivalent to
\[
    k=o\left(\sqrt{\frac{n}{\mu}}\right),
\]
since $\mu=n\phi_d r^d$.
\end{proof}

\begin{proof}[Proof of Theorem \ref{thm:negative_VD_algo}]
(i) Since the balls of radius $r$ centered at the clique vertices do not overlap w.h.p., it is sufficient to prove that
\begin{align*}
&\PP\left(\min_{i \in K} Z_i \le \max_{i \in V \backslash K} Z_i\right)\to 1.
\end{align*}
From \eqref{eq:maximum_RGG_threshold} and Lemma~\ref{lem:max} we know that $\max_{i \in V \setminus K} Z_i = T(n) (1+o_{\mathbb{P}}(1))$. 
Write $T(n):= \mu h$, with $h = H_{+}^{-1}(\alpha^{-1})>1$ as in Equation \eqref{eq:maximum_RGG_threshold}. Therefore, it is sufficient to prove that
\begin{equation}
    \Prob{\exists i \in K \colon Z_i^{V\backslash K} \leq T(n) (1+o_{\mathbb{P}}(1)) - k} \to 1. \label{eq:second_condition}
\end{equation}
At this point, we can rewrite
\begin{align}
    &\Prob{\exists i \in K \colon Z_i^{V\backslash K} \leq T(n)(1+o_{\mathbb{P}}(1)) - k} = 1 - \Prob{\forall i \in K \colon Z_i^{V\backslash K} > T(n) - k} \nonumber \\
    &\qquad = 1 - \Prob{Z_i^{V\backslash K} > T(n) (1+o_{\mathbb{P}}(1)) - k}^k(1+o(1)), \nonumber
\end{align}
where in the last step we used Lemma \ref{lem:independent_vertices_K}.

Applying a Chernoff bound to the Poisson random variable $Z_i$ for $n$ large, we obtain
\begin{align}
    \Prob{Z_i^{V\backslash K} > T(n) (1+o_{\mathbb{P}}(1)) - k}^k &\leq \Prob{Z_i^{V\backslash K} > h' \mu}^k + \PP\left(o_\PP(1)<-\delta(h-h')\right) \nonumber\\ 
    &\le e^{-\mu k(h' \log h' - h' + 1)} +  o(1) = o(1) \nonumber
\end{align}
for some $1<h'<1+\delta(h-1)$ (recall that $k\le (1-\delta)(T(n)-\mu)=(1-\delta)(h-1)\mu$; therefore, $T(n)-k=h\mu-k\ge \mu+ \delta(h-1)\mu$). Thus, condition \eqref{eq:second_condition} is satisfied.\\

(ii) We will prove that, for any $\varepsilon > 0$,
$$ \limsup_{n \to \infty} \Prob{\max_{i \in V\setminus K} Z_i > \min_{i \in K} Z_i}  \geq 1-\varepsilon. $$
Let $M>0$. We can tessellate the torus with $M^d$ cubes of side length $1/M$ each. To get the desired result, we will proceed in two steps: first, we show that every cube has at least one node from $V \setminus K$ such that the ball of radius centered in this node is completely contained in the cube; second, for small $k$, we show that some of the balls previously constructed may naturally contain an \textit{excess-}$k$ amount of nodes.

Formally, partition the cube $[0,1]^d$ into cubes $C_1,C_2,...$ of side length $1/M$, indexed in the set $\mathcal{M}$ with cardinality $M^d$. Each cube $C_i$ completely contains a ball $B_i$ centered in $v_i$ if and only if the node $v_i$ is contained in the \textit{core} $Q_i$ of the cube. In particular, the core $Q_i$ is a smaller $d$-dimensional cube of side length $(1/M - 2r)$. Denote by $\mathcal{Q}_i$ the set of vertices in $V \setminus K$ whose positions are contained in $Q_i$. Then,
\begin{align}
    \Prob{\bigcap_{i \in \mathcal{M}} \{\mathcal{Q}_i \not = \varnothing \}} &= 1 - \Prob{\bigcup_{i \in \mathcal{M}} \{\mathcal{Q}_i \not = \varnothing \}} \nonumber \\
    &\geq 1 - M^d \Prob{\{\mathcal{Q}_1 \not = \varnothing \}}. \nonumber
\end{align}
Since $N_n-k > \alpha n$ for some $\alpha \in (0,1)$, with high probability, we obtain $\Prob{\{\mathcal{Q}_1 \not = \varnothing \}} \le e^{-\alpha n(1/M-2r)^d}+o(1)$. Then, for any fixed $M$, the probability that all cores contain at least one vertex that is not part of the planted clique converges to 1, as $n$ grows. In other words, with high probability, each of the cubes $\{C_i\}_{i \in \mathcal{M}}$ contains a vertex $v_i \in V \setminus K$ such that its ball of radius $r$ is completely contained in $C_i$ and these balls are all disjoint.

Since the balls around vertices $\{v_i\}_{i \in \mathcal{M}}$ do not overlap, the degrees of these vertices are i.i.d. $\text{Poisson}(\mu)$. Moreover, the minimum degree among the planted clique vertices is stochastically dominated by the number of nodes contained in a ball of radius $r$ augmented by $k$, that is, $\min_{i \in K} Z_i \prec Y + k$, where $Y \sim \text{Poisson}(\mu)$.

The VD-algorithm fails to output the correct planted clique whenever $\max_{i \in V\setminus K} Z_i > \min_{i \in K} Z_i$. Computing the probability of the complementary event yields
\begin{align}
    \Prob{\max_{i \in V\setminus K} Z_i \le \min_{i \in K} Z_i} &\leq \Prob{\max_{i \in V\setminus K} Z_i \le Y + k} \nonumber \\
    &\leq \Prob{Z_{i} \le Y + k, \forall i \in \mathcal{M}}\nonumber \\
    &= \sum_{y=0}^{\infty} \Prob{Y = y} \Prob{Z \le y+k}^{M^d} \nonumber
\end{align}
where $Y,Z \sim \text{Poisson}(\mu)$. Since $\mu \to \infty$, in the large $n$ limit the random variable $\text{Poisson}(\mu)$ is approximated by $\mathcal{N}(\mu,\mu)$. Then, substituting $y = \mu + s \sqrt{\mu}$,
\begin{align}
   \liminf_{n \to \infty} \Prob{\max_{i \in V\setminus K} Z_i < \min_{i \in K} Z_i} &\leq \liminf_{n \to \infty} \int_{-\infty}^{\infty} \varphi(s) \Phi\left(s + \frac{k}{\sqrt{\mu}}\right)^{M^d} ds \nonumber
\end{align}
where $\varphi(\cdot)$ and $\Phi(\cdot)$ are the probability and cumulative distribution functions of the standard normal distribution. Lastly, set $\delta := k/\sqrt{\mu} = o(1)$. A first-order approximation yields $\Phi(s + \delta) = \Phi(s) + \delta\varphi(s) + O(\delta^2)$. Therefore,
\begin{equation*}
    \int_{-\infty}^{\infty} \varphi(s) \Phi\left(s + \delta\right)^{M^d} ds = \int_{-\infty}^{\infty} \varphi(s) \Phi(s)^{M^d} ds + \delta M^d \int_{-\infty}^{\infty} \varphi(s)^2 \Phi(s)^{M^d-1} ds + O(\delta^2).
\end{equation*}
Since $\varphi(s) = \Phi(s)'$, the first integral in the latter equation equals $\frac{1}{M^d + 1}$; then, bounding the second integral with 1 and putting the pieces together, we obtain:
\begin{equation}
    \liminf_{n \to \infty} \Prob{\max_{i \in V\setminus K} Z_i < \min_{i \in K} Z_i} \leq \frac{1}{M^d + 1}. \nonumber
\end{equation}
Finally, by setting $M^d = 1/\varepsilon -1$, the desired result is achieved.
\end{proof}

\section{Proofs for CN-algorithm}\label{sec:CN_proof}

The CN algorithm iterates over all edges and stops whenever it encounters an edge with $k-2$ common neighbours forming a clique. There are two scenarios where the algorithm fails to identify the $k$-clique:
\begin{itemize}
    \item[(a)] All pairs $(i,j) \in K^2$ have at least one common neighbour outside $K$.
    \item[(b)] There exists an edge outside the clique, $(i,j) \in E \setminus K^2$, such that the number of common neighbours is $|Z_{ij}| = k-2$ and they form a clique.
\end{itemize}
Therefore, we can express the failure of the CN-algorithm in terms of the following events:
\begin{align}
    \mathcal{A} &= \mathcal{A}_n=\left\{ \forall (i,j) \in K^2, |Z_{ij}| > k-2 \right\}, \nonumber \\
    \mathcal{B}_1 &=  \mathcal{B}_{n,1}=\left\{ \exists (i,j) \in E \setminus K^2 : N_i\cap N_j \textit{ form a clique} \right\},\nonumber\\
    \mathcal{B}_2 &= \mathcal{B}_{n,2}=\left\{ \exists (i,j) \in E \setminus K^2 : |Z_{ij}| = k-2 \right\}. \nonumber
\end{align}
The probability that the algorithm fails is upper bounded by the probability of the event $\mathcal{A}\cup (\mathcal{B}_1\cap \mathcal{B}_2)$, and
\begin{equation}
\label{eq:failure-CN-AB12}
\PP(\mathcal{A}\cup (\mathcal{B}_1\cap \mathcal{B}_2))\le \PP(\mathcal{A}) + \PP(\mathcal{B}_1\cap \mathcal{B}_2)\le \PP(\mathcal{A}) +\min\{\PP(\mathcal{B}_1),\PP(\mathcal{B}_2)\}.\end{equation}
We compute the probability of $\mathcal{A},\mathcal{B}_1,\mathcal{B}_2$ separately, and after that conclude under which conditions on $k = k(n)$ and $\mu = \mu(n)$ the algorithm succeeds, with high probability. We remind that in our assumptions, the connection radius is set to be $r \in (0, 1/4)$ for RGG, and the planted clique problem is assumed to be well-posed, as stated in Section \ref{sec:problem_description}.
\begin{lemma}
\label{prop:CNfailA}
In $G_k(n,r)$,
\begin{equation}\label{eq:CNfailA}
    \Prob{\mathcal{A}} \to 0 \quad \text{as $n \to \infty$}. \nonumber
\end{equation}
\end{lemma}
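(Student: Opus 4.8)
The plan is to establish the complementary statement: with high probability the planted clique $K$ contains at least one pair of vertices that is \emph{too far apart} to share a neighbour outside $K$, which is precisely the negation of $\mathcal{A}$. The key elementary observation is that for a pair $(i,j)\in K^2$ the edge $\{i,j\}$ is present (it is planted) and $N(i)\cap N(j)\supseteq K\setminus\{i,j\}$, so $|Z_{ij}|>k-2$ holds if and only if some vertex $v\in V\setminus K$ is adjacent to both $i$ and $j$, i.e.\ $X_v$ lies in the lens $B_T(X_i,r)\cap B_T(X_j,r)$ (where $B_T(x,\rho)=\{y:d_T(x,y)<\rho\}$). By the triangle inequality for $d_T$, this lens is empty as soon as $d_T(X_i,X_j)\ge 2r$; hence on the event $\mathcal{A}$ \emph{every} pair of clique vertices must lie within distance $2r$ of each other.

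Consequently I would bound, using that $X_{i_1},\dots,X_{i_k}$ are i.i.d.\ uniform on the torus,
\[
    \Prob{\mathcal{A}}\ \le\ \Prob{d_T(X_{i_1},X_{i_l})<2r \text{ for all } l=2,\dots,k}\ =\ \Prob{X_{i_2},\dots,X_{i_k}\in B_T(X_{i_1},2r)}.
\]
Since $r<1/4$ gives $2r<1/2$, the ball $B_T(X_{i_1},2r)$ does not wrap around and has volume $\phi_d(2r)^d=2^d\mu/n$ irrespective of $X_{i_1}$, so conditioning on $X_{i_1}$ yields
\[
    \Prob{\mathcal{A}}\ \le\ \Bigl(\tfrac{2^d\mu}{n}\Bigr)^{k-1}.
\]
This is the birthday-paradox-type computation already used in the proof of Lemma~\ref{lem:independent_vertices_K}, now with radius $2r$; note also that $2^d\mu/n=\phi_d(2r)^d<1$, being the volume of a ball of radius strictly below $1/2$ inside the unit torus.

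It then remains to check that the bound vanishes. If $\mu=o(n)$, then $2^d\mu/n\to0$ and, since $k\ge2$, the right-hand side tends to $0$. If $\mu=\Theta(n)$, the well-posedness assumption excludes $k=O(1)$, hence $k\to\infty$, and as $2^d\mu/n$ stays bounded away from $1$ the quantity $(2^d\mu/n)^{k-1}$ again tends to $0$. In both cases $\Prob{\mathcal{A}}\to0$, proving the lemma. One may also record, for use later, that whenever some clique pair $(i,j)$ is at distance $\ge 2r$ the corresponding edge has exactly $k-2$ common neighbours, all of them in $K$ and hence forming a clique, so $\neg\mathcal{A}$ indeed supplies the "good edge" that the CN-algorithm needs.

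The geometric reduction and the volume estimate are routine; the only point that needs a little care is the final case analysis, where well-posedness has to be invoked precisely in the regime $\mu=\Theta(n)$ to force $k\to\infty$. A finer version — bounding, conditional on all clique pairs lying within $2r$, the probability that none of the $\binom{k}{2}$ lenses captures a vertex of $V\setminus K$ — would give a quantitative rate, but it is not needed for the stated qualitative claim.
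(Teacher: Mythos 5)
Your argument is correct in substance and its first half coincides exactly with the paper's: the same reduction of $\mathcal{A}$ to the event that every pair of clique vertices lies within distance $2r$ (since the lens $B_T(X_i,r)\cap B_T(X_j,r)$ is empty once $d_T(X_i,X_j)\ge 2r$), followed by the same pivoting/volume bound $\Prob{\mathcal{A}}\le (2^d\mu/n)^{k-1}$. Where you diverge is in the regime $\mu=\Theta(n)$ (hence $k\to\infty$ by well-posedness): the paper abandons the product bound there and instead argues qualitatively, fixing two small balls $U,V$ at mutual distance larger than $2r$ (possible because $2r<\sqrt{d}/2$, the diameter of the torus) and noting that each captures a clique vertex w.h.p., so some clique pair is separated by more than $2r$. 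You instead push the quantitative bound through, using that $2^d\mu/n=\phi_d(2r)^d$ stays below $1$ while $k\to\infty$. Your route is more unified and even yields a rate, but the step ``bounded away from $1$'' deserves a word of justification: for $d\ge 2$ it holds uniformly since $\phi_d(2r)^d\le\phi_d/2^d<1$, but for $d=1$ one has $\phi_1(2r)=4r$, which can tend to $1$ as $r\uparrow 1/4$, in which case $(4r)^{k-1}\to 0$ only if $k(1-4r)\to\infty$. So either restrict to $r$ bounded away from $1/4$ (which is also what the paper's own closing sentence effectively assumes via the condition $2^d\mu/n<1-\varepsilon$), or fall back on the two-ball separation argument for that corner case. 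Your closing remark that $\neg\mathcal{A}$ produces a clique edge with exactly $k-2$ common neighbours, all inside $K$, is correct and is precisely why $\mathcal{A}$ is the relevant failure event.
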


\begin{proof}
Observe that for any edge $(i,j) \in K^2$ the number of common neighbours is $|Z_{ij}| \geq k-2$, as all the other vertices in the clique are common neighbours of $i$ and $j$. However, $|Z_{ij}| > k-2$ if and only if $i,j$ do share at least one common neighbour from the vertex set $V \setminus K$. A necessary condition for such an event is $d(X_i, X_j) < 2r$. Therefore:
\begin{align}
    \Prob{\mathcal{A}} &= \Prob{\forall i,j \in K : |Z_{ij}| > k-2} \nonumber \\
    &= \Prob{\forall i,j \in K : N(i) \cap N(j) \cap ( V \setminus K ) \not = \varnothing} \nonumber \\
    &\leq \Prob{\forall i,j \in K : d(X_i, X_j) < 2r} \label{eq:bound_A},
\end{align}
We now distinguish two cases: when $k=O(1)$, or it grows with $n$.\\

\textit{Case $k = O(1)$}: Pivoting around a selected vertex $i \in K$, a necessary condition for the event in \eqref{eq:bound_A} to happen is that all other vertices $j \in K$ have position inside a ball centered in $X_i$ of radius $2r$. That is, 
\begin{align}
    \Prob{\forall i,j \in K : d(X_i,X_j) < 2r} &\leq \Prob{\forall j \in K : X_j \in B(X_i,2r)} \nonumber \\
    &= ( 2^d \phi_d r^d)^{k-1} = (2^d \mu/n)^{k-1} \nonumber,
\end{align}
and the latter quantity converges to 0 as $n$ goes to infinity unless $\mu \propto n$, that is, unless the problem is ill-posed.\\

\textit{Case $k \gg 1$}: From the technical assumption $0< r < 1/4$ on the model $G_k(n,r)$, it holds $2r < \sqrt{d}/2$ for all $d \geq 1$. In other words, $2r$ is smaller than the diameter of the $d$-dimensional torus. Therefore, we can consider two points $x_0,y_0 \in \mathbb{T}^d$ that are at a distance larger than $2r$. Let $U$ and $V$ be two open balls centered in $x_0$ and $y_0$ of small enough radius such that their distance is still larger than $2r$. Since $k \gg 1$, and vertices in $K$ are distributed uniformly at random, it follows that both $U$ and $V$ contain at least one vertex of the set $K$ with high probability. Therefore, there exists at least one pair of vertices in $K$ at a distance larger than $2r$, and the probability in \eqref{eq:bound_A} converges to 0.

In particular, if $\mu\ll n$ or $k\gg 1$ and $2^d\mu/n < 1-\varepsilon$, for some $\varepsilon>0$, then $\lim_{n\to\infty}\Prob{\mathcal{A}}=0$.
\end{proof}

Next, we bound the probabilities of events $\mathcal{B}_1$ and $\mathcal{B}_2$.

\begin{lemma}\label{prop:CNfailB1new}
In $G_k(n,r)$: \\
(i) If $\mu\ll n$ then
\begin{equation}\label{eq:CNfailB1new}
    \Prob{\mathcal{B}_1} \leq \mu n  e^{- c_{1,d} \phi_d r^d\,(n - k + 1 + k\,O(r^d))} +o(1), \nonumber
\end{equation}
for some $c_{1,d}>0$.

\noindent (ii) If $r= (\gamma/\phi_d)^{1/d}$ so that $\mu \sim \gamma n$ for $\gamma\in (0,1)$, there exists some constant $M_0$ such that for all $M\ge M_0$, 
\begin{equation}\label{eq:CNfailB1new-dense}
    \Prob{\mathcal{B}_1} \leq  M^d  e^{- M^{-d}n - k\log(1-M^{-d})}. \nonumber
\end{equation}
\end{lemma}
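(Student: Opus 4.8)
The plan is to bound $\Prob{\mathcal B_1}$ by a first--moment (union bound) argument over the edges, exploiting the geometric picture of Figure~\ref{fig: common-neighbors-area-algo-desc}: an edge $(i,j)\in E\setminus K^2$ can have $N_i\cap N_j$ forming a clique only if the lens $L_x:=B(X_i,r)\cap B(X_j,r)$, with $x=d_T(X_i,X_j)<r$, fails to contain a vertex outside $K$ in at least one of two far--apart sub--regions, because two common neighbours of $i,j$ that lie outside $K$ and at distance $>r$ are non--adjacent. This uses only common neighbours outside $K$, so it applies uniformly to all of $E\setminus K^2$, whether or not an endpoint lies in $K$. The crucial geometric input, proved separately, is: \emph{for a constant $c_{1,d}>0$ depending only on $d$ and uniformly over $x\in[0,r)$, the lens $L_x$ contains two regions $R_1,R_2$ with $\mathrm{vol}(R_\ell)\ge c_{1,d}\phi_d r^d$ and $\inf_{u\in R_1,\,w\in R_2}d_T(u,w)>r$.} The assumption $r<1/4$ is what makes this work: it forces the lens to have diameter $<1/2$, so $d_T$ coincides with the Euclidean metric on it, and the two far Euclidean ``poles'' of the lens are then genuinely non--adjacent on the torus.

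\textbf{Part (i) ($\mu\ll n$).} Write $N:=|\{(i,j)\in E\setminus K^2:\ N_i\cap N_j\text{ is a clique}\}|$, so $\Prob{\mathcal B_1}=\Prob{N\ge1}\le\E{N}$. By linearity of expectation (Mecke's formula for the Poisson process, which absorbs the random vertex count $N_n$ and, after conditioning, the uniform choice of $K$), $\E{N}$ is at most $\tfrac{n^2}{2}$ times the per--pair probability that two independent uniform points $X_i,X_j$ are adjacent \emph{and} have clique--forming common neighbours; adjacency contributes the factor $\Prob{d_T(X_i,X_j)<r}=\phi_d r^d=\mu/n$. Conditioning on $d_T(X_i,X_j)=x<r$ and invoking the geometric claim, if $N_i\cap N_j$ is a clique then $R_1$ or $R_2$ is free of vertices outside $K$; since such vertices number $N_n-k-O(1)$, are i.i.d.\ uniform and independent of $X_i,X_j$,
\[
\Prob{R_\ell\text{ free of vertices outside }K}=\E{(1-\mathrm{vol}(R_\ell))^{\,N_n-k-O(1)}}\le(1-\mathrm{vol}(R_\ell))^{-k-O(1)}e^{-n\,\mathrm{vol}(R_\ell)}.
\]
Using $\mathrm{vol}(R_\ell)\ge c_{1,d}\phi_d r^d$ and $-\log(1-\mathrm{vol}(R_\ell))=\mathrm{vol}(R_\ell)+O(\mathrm{vol}(R_\ell)^2)$ turns the right--hand side into $(1+o(1))\exp\{-c_{1,d}\phi_d r^d(n-k+k\,O(r^2))\}$; summing the two choices of $\ell$, multiplying by $\tfrac{n^2}{2}\cdot\tfrac{\mu}{n}=\tfrac{\mu n}{2}$, and pushing the small--probability deviation of $N_n$ from $n$ into an $o(1)$ term yields \eqref{eq:CNfailB1new}.

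\textbf{Part (ii) ($r=(\gamma/\phi_d)^{1/d}$ constant).} Here $k$ may be of order $n$, so the factor $(1-\mathrm{vol}(R_\ell))^{-k}$ is no longer harmless and I would replace the ``free regions'' by entire cells of a fixed tessellation. Tessellate the torus into $M^d$ cubes of side $1/M$. Since $r$ is bounded below, the geometric claim upgrades: uniformly over $x\in[0,r)$ the lens $L_x$ contains two balls $B(p_1,c\,r),B(p_2,c\,r)$ at mutual distance $>r$, and each such ball contains a whole tessellation cube once $M\ge M_0$ for a suitable $M_0=M_0(d,\gamma)$; fix such an $M$. Then if \emph{every} cube contains a vertex outside $K$, no edge of $E\setminus K^2$ has clique--forming common neighbours, hence
\[
\Prob{\mathcal B_1}\le\Prob{\exists\text{ a cube with no vertex outside }K}\le M^d\,\Prob{C_1\text{ has no vertex outside }K}=M^d(1-M^{-d})^{-k}e^{-M^{-d}n},
\]
using $\E{(1-M^{-d})^{N_n-k}}=(1-M^{-d})^{-k}e^{-M^{-d}n}$; rewriting $(1-M^{-d})^{-k}=e^{-k\log(1-M^{-d})}$ gives \eqref{eq:CNfailB1new-dense}.

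\textbf{Main obstacle.} The hard part will be the geometric claim: a lower bound, \emph{uniform in the centre distance} $x$ and with constants depending only on $d$, on the volume of two $r$--separated sub--regions (and, in part~(ii), two cells of the fixed grid) of the lens $B(X_i,r)\cap B(X_j,r)$. The critical configuration is $x\to r$, where the lens is thinnest, and one must check there that two ``poles'' of the lens still carry macroscopic balls at distance exceeding $r$, while keeping the toroidal metric under control via $r<1/4$. Everything else — the Mecke/first--moment bookkeeping, the fluctuations of $N_n$, and the $(1-\mathrm{vol})^{-k}$ correction that produces the harmless $k\,O(r^2)$ term in \eqref{eq:CNfailB1new} — is routine.
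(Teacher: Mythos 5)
Your proposal is correct and follows essentially the same route as the paper: the two $r$-separated regions $R_1,R_2$ inside the lens, the void-probability bound $\E{(1-\mathrm{vol}(R_\ell))^{N_n-k}}$, and the fixed tessellation for the dense case in part (ii). The only difference is bookkeeping — you control the number of edges via Mecke's formula/first moment, while the paper uses a Stein--Chen Poisson approximation of $|E|$ plus a union bound — and the geometric claim you flag as the main obstacle is asserted at the same level of detail in the paper (treating $x=r$ as the extremal case and noting the lens only grows as $x$ decreases).
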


\begin{proof}
First, we prove (i). Define the events $A_{ij}:=\{N_i \cap N_j \text{ form a clique}\}$ for all $i<j$. Conditionally on the number of vertices, we bound the probability:
\begin{equation}
    \Prob{\mathcal{B}_1 \mid N_n = m} \leq \sum_{1\leq i<j\leq m} \Prob{A_{ij}, d(X_i,X_j) \leq r \mid N_n = m} \nonumber
\end{equation}
The common neighbours of $i$ and $j$ must have their positions inside the intersection of two balls of radius $r$ drawn around $X_i$ and $X_j$ (as shown in Figure \ref{fig: common-neighbours}). However, not all pairs of vertices in the intersection are necessarily connected to each other, as the diameter of the intersection is larger than $r$. For example, assume $i$ and $j$ are at distance $r$, and two vertices $v,w \not \in K$ have positions $X_{v}$ and $X_{w}$ inside the regions $R_1$ and $R_2$, respectively (see Figure \ref{fig: common-neighbours-area-algo-desc}). Then, $v,w \in N_i \cap N_j$, but $(v,w) \not \in E$ as their distance is larger than $r$. 
\begin{figure}[t]
    \centering
    \includegraphics[width = 0.6\textwidth]{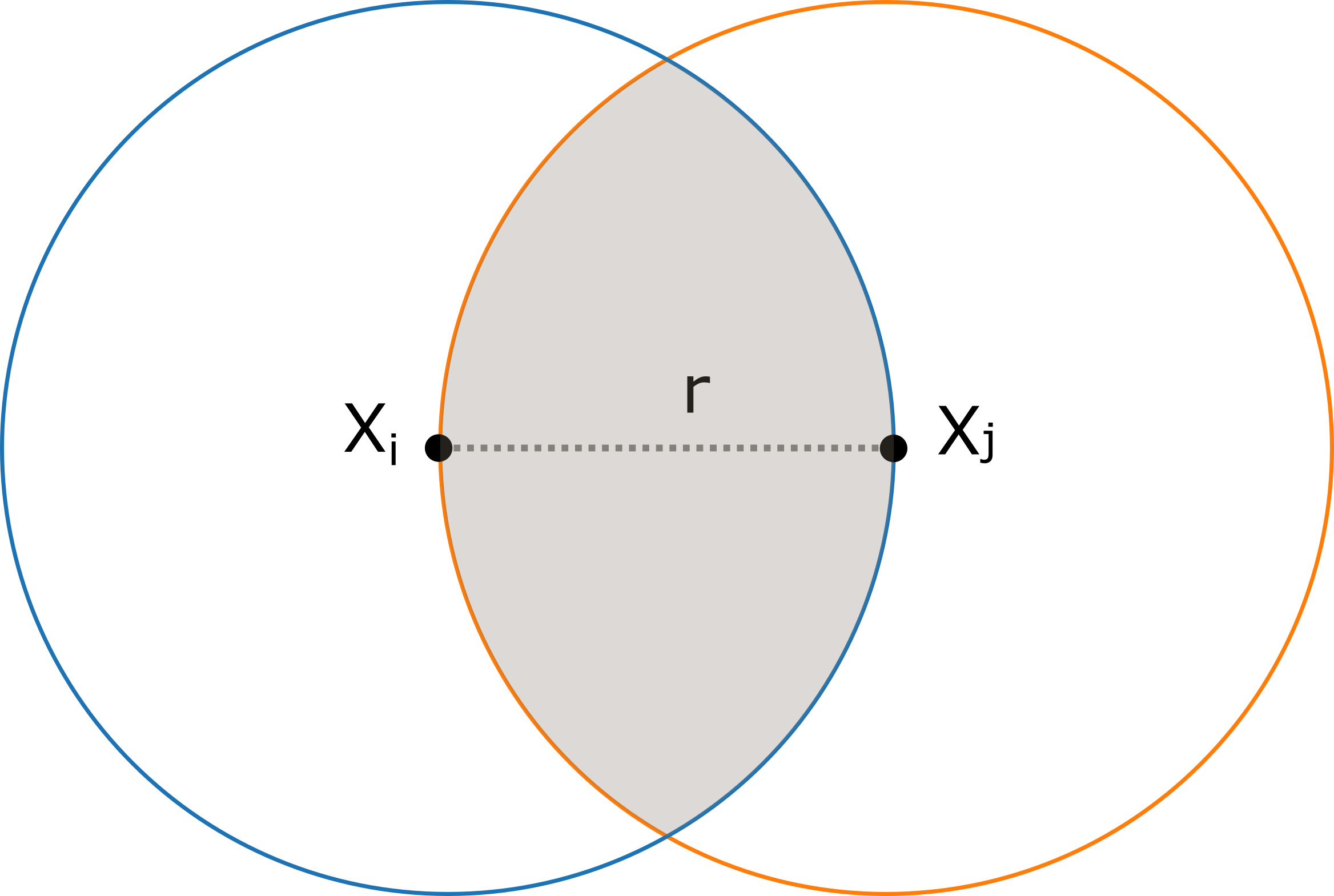}
    \caption{Representation for $d=2$ of the area that can contain common neighbours of $i$ and $j$}
    \label{fig: common-neighbours}
\end{figure}

At any fixed dimension $d$, the volume of the two regions $R_1$, $R_2$ in Figure \ref{fig: common-neighbours-area-algo-desc} has value $c_{1,d} \phi_d r^d$,
with $c_{1,d} > 0$, and such regions become larger as the vertices $i$ and $j$ are closer. Therefore, a necessary condition for the vertices in $N_i \cap N_j$ to form a clique is that either a region of size larger or equal to $R_1$ or $R_2$ contains no vertex of $V \setminus K$. Thus, 
\begin{align}
\nonumber
    &\Prob{A_{ij},d(X_i,X_j)\leq r \mid N_n = m} = \frac{\mu}{m}\Prob{A_{ij} \mid d(X_i,X_j)\leq r, N_n = m} \\
\nonumber
    & \qquad \leq \frac{\mu}{m}\Prob{\{\forall v \in V \setminus K, X_v \not \in R_1 \} \cup \{\forall v \in V \setminus K, X_v \not \in R_2\} \mid N_n =m} \\
\nonumber
    &\qquad \leq \frac{\mu}{m}2\Prob{\forall v \in V \setminus K, X_v \not \in R_1 \mid N_n = m} = \frac{2\mu}{m}(1-c_{1,d} \phi_d r^d)^{\max\{(m-k),0\}}.
\end{align}       
Thus, we obtain
$$\Prob{\mathcal{B}_1 \mid N_n = m} \leq \mu m (1-c_{1,d} \phi_d r^d)^{\max\{(m-k),0\}}$$
Finally, integrating over $N_n$, and recalling the Poisson Stein identity $\E{N_n f(N_n)}=n\E{f(N_n+1)}$, we get
\begin{align}
    \Prob{\mathcal{B}_1} &= \E{\Prob{\mathcal{B}_1 \mid N_n}} = \mu n \E{(1-c_{1,d} \phi_d r^d)^{\max\{(N_n-k + 1),0\}}} \nonumber\\
    &= \mu n e^{- c_{1,d} \phi_d r^d\,n-\log(1-c_{1,d} \phi_d r^d)\,(k - 1)} + o(1) \nonumber
\end{align}
where the last step follows from the explicit formula for the generating function of the Poisson random variable. Expanding $\log(1-c_{1,d} \phi_d r^d)$, this gives the desired result.
 
Next, we prove (ii). We do it in a standard way by dividing the space into $M^d$ cubes with side $1/M$ where $M\ge M_0$, and $M_0$ is such that the volume $c_{1,d} \phi_d r^d$ completely covers one of the $M_0^d$ cubes. Then, with the same argument as above, and using the union bound over $M^d$ cubes, we obtain that 
\begin{align*}
    \Prob{\mathcal{B}_1}&\le \PP(\exists\; \text{cube of size $1/M^d$ that contains no vertices of $V\backslash K$})\\
    & \le  M^d  e^{- M^{-d}n - k\log(1-M^{-d})}.
\end{align*}
This completes the proof. 
\end{proof}

It remains to bound the probability of $\mathcal{B}_2$. We do it in the following lemma.

\begin{lemma}\label{prop:CNfailB2new} In $G_k(n,r)$, if $\mu\ll n$ and $k \ll n/\mu$, then     
\begin{equation}\label{eq:CNfailB2new}
        \Prob{\mathcal{B}_2} \leq
        \begin{cases}
        \displaystyle \frac{\mu n}{2} \cdot \frac{(c_{2,d} \mu)^{k-2} e^{- c_{2,d} \mu}}{(k-2)!}+o(1) &\text{if $k-2 \leq c_{2,d} \mu$}, \\
        \displaystyle \frac{\mu n}{2} \cdot \frac{((k-2)/e)^{k-2}}{(k-2)!}+o(1) &\text{if $c_{2,d} \mu < k-2 < \mu$}, \\
        \displaystyle \frac{\mu n}{2} \cdot \frac{\mu^{k-2} e^{- \mu}}{(k-2)!}+o(1) &\text{if $k-2 \geq \mu$},
        \end{cases}
    \end{equation}
    for some $c_{2,d}>0$.
\end{lemma}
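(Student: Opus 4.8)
The plan is to follow the same three-step template as in the proof of Lemma~\ref{prop:CNfailB1new}(i): a union bound over edges, a single-edge estimate, and a Poissonisation of $|E|$ to count the edges. I start from
\begin{equation*}
\Prob{\mathcal{B}_2}\;\le\;\sum_{(i,j)\in E\setminus K^2}\Prob{|Z_{ij}|=k-2}.
\end{equation*}
Fix a \emph{generic} edge, i.e.\ one with $i,j\notin K$, and condition on the positions $X_i,X_j$ with $d_T(X_i,X_j)=x\le r$. Because $i,j\notin K$, adjacency to either endpoint is purely geometric, so a vertex $\ell$ is a common neighbour of $i$ and $j$ if and only if $X_\ell$ lies in the lens $L_x:=B(X_i,r)\cap B(X_j,r)$ (a clique vertex inside $L_x$ still counts, one outside does not). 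By the Poisson nature of the point process, conditionally on $X_i,X_j$ the count $|Z_{ij}|$ is $\mathrm{Poisson}\bigl(n\,\mathrm{vol}(L_x)\bigr)$, and this is independent of the edge event $\{(i,j)\in E\}$. Since $r<1/4$ the lens does not wrap around the torus, so $\mathrm{vol}(L_x)$ is the ordinary Euclidean lens volume, which decreases continuously from $\phi_d r^d$ at $x=0$ to $c_{2,d}\,\phi_d r^d$ at $x=r$ for a purely dimensional constant $c_{2,d}\in(0,1)$; hence the parameter $\lambda:=n\,\mathrm{vol}(L_x)$ lies in $[c_{2,d}\mu,\mu]$.

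The second step is a one-line optimisation: the Poisson mass function $\lambda\mapsto\lambda^{m}e^{-\lambda}/m!$ is unimodal with peak at $\lambda=m$, so with $m=k-2$,
\begin{equation*}
\Prob{|Z_{ij}|=k-2}\;\le\;\max_{\lambda\in[c_{2,d}\mu,\,\mu]}\frac{\lambda^{k-2}e^{-\lambda}}{(k-2)!}\;=\;
\begin{cases}
\dfrac{(c_{2,d}\mu)^{k-2}e^{-c_{2,d}\mu}}{(k-2)!}, & k-2\le c_{2,d}\mu,\\[2.5mm]
\dfrac{\bigl((k-2)/e\bigr)^{k-2}}{(k-2)!}, & c_{2,d}\mu<k-2<\mu,\\[2.5mm]
\dfrac{\mu^{k-2}e^{-\mu}}{(k-2)!}, & k-2\ge\mu,
\end{cases}
\end{equation*}
which is precisely the per-edge content of \eqref{eq:CNfailB2new}: the three cases are respectively the left endpoint, the interior peak, and the right endpoint of the interval $[c_{2,d}\mu,\mu]$.

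For the third step I would count the edges exactly as in Lemma~\ref{prop:CNfailB1new}(i): since $\mu\ll n$, the Stein--Chen method shows that $|E|$ is well concentrated around a $\mathrm{Poisson}(\mu n/2)$ variable, so $|E|\le(1+\varepsilon)\mu n/2$ with probability $1-o(1)$, and on this event the edge indicators and the common-neighbour counts are asymptotically independent. Combining this with the single-edge bound via the union bound over $E\setminus K^2\subseteq E$ yields $\Prob{\mathcal{B}_2}\le(1+\varepsilon)\tfrac{\mu n}{2}$ times the displayed maximum, plus an $o(1)$ term absorbing the Stein--Chen approximation error and the probability that the Poisson count overshoots its mean by a factor $1+\varepsilon$; this is the claimed bound \eqref{eq:CNfailB2new}.

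The main obstacle, as already in Lemma~\ref{prop:CNfailB1new}, is the dependence structure — the indicator $\{(i,j)\in E\}$, the count $|Z_{ij}|$, and the random clique $K$ are not jointly independent — which forces the ``union bound over $(1+\varepsilon)\mu n/2$ edges'' to be routed through the Poissonisation of $|E|$ rather than done naively. A secondary, purely technical point is the \emph{non-generic} edges of $E\setminus K^2$, those with exactly one endpoint in $K$: there are only $o(\mu n)$ of them (a vanishing fraction of all edges, using $k\ll n/\mu$), and for such an edge $|Z_{ij}|$ is again a lens count plus a lower-order contribution from the clique vertices near the non-clique endpoint, whose expected number is $O(k\mu/n)=o(1)$ under the same hypothesis; I expect this bookkeeping to be the most delicate part of an otherwise routine argument, and it is swallowed by the $o(1)$ error term.
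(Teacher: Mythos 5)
Your treatment of the edges with both endpoints outside $K$ is exactly the paper's: the same union bound over $E\setminus K^2$, the same identification of $|Z_{ij}|$ (conditionally on $d_T(X_i,X_j)=x$) as Poisson with mean $nA(x,r)\in[c_{2,d}\mu,\mu]$, the same unimodal maximisation of $\lambda\mapsto\lambda^{k-2}e^{-\lambda}/(k-2)!$ over that interval yielding the three regimes, and the same Stein--Chen Poissonisation of $|E|$ to justify the prefactor $(1+\varepsilon)\mu n/2$.

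The one step you assert rather than prove is precisely where the paper spends half of its proof: the edges of $E\setminus K^2$ with exactly one endpoint in $K$. For such an edge $|Z_{ij}|=\mathrm{Poi}(nA(x,r))+W$, where $W$ counts clique vertices in $B(X_i,r)\setminus B(X_j,r)$, and it is not automatic that $\E{W}=O(k\mu/n)=o(1)$ lets the correction be ``swallowed by the $o(1)$'': a positive shift $W=w$ moves the target count toward the Poisson mode, so $\Prob{\mathrm{Poi}(\lambda)+W=k-2}$ can exceed $\Prob{\mathrm{Poi}(\lambda)=k-2}$ by a multiplicative factor of order $((k-2)/\lambda)^{w}$, which is large exactly in the tail regime $k-2\gg\mu$ where the bound you are trying to prove is tiny. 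The paper closes this with a short case analysis: for $k-2\le c_{2,d}\mu$ the Poisson mass function is monotone below its mode, so $\Prob{\mathrm{Poi}(\lambda)=k-2-w}\le\Prob{\mathrm{Poi}(\lambda)=k-2}$ for all $w\ge 0$ and the shift costs nothing; for $k-2\ge\mu$ it splits on $\{W=0\}$ versus $\{W\ge 1\}$ and bounds $\Prob{W\ge 1}\le\E{W}\le k\mu/n$ by Markov's inequality, which is where the hypothesis $k\ll n/\mu$ enters; the intermediate regime is handled by the mode bound $\Prob{\mathrm{Poi}(k-2)=k-2}$. You need to supply one of these arguments (or an equivalent one); your remark that only $o(\mu n)$ edges are affected helps control the resulting additive error after the union bound, but it does not by itself rule out the multiplicative inflation of the per-edge probability.
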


\begin{proof}
We bound the probability:
\begin{equation} \label{eq:upper_bound_CN_fail}
    \Prob{\mathcal{B}_2} \leq \sum_{(i,j) \in E \setminus K^2} \Prob{|Z_{ij}| = k-2}. \nonumber
\end{equation}
First, observe that for any edge $(i,j) \in E \setminus K^2$ the distance between the two endpoints is $x:= d(X_i, X_j) \leq r$. For ease of notation, we denote by $A(x,r) = \mathcal{V}(B(X_i, r) \cap B(X_j,r))$ the volume of the intersection of the two balls, which only depends on $x$ and $r$. Let us now distinguish two cases:
\begin{description}
    \item[Case 1.] When $i,j \in V \setminus K$, their common neighbours can only be found in the intersection $B(X_i, r) \cap B(X_j,r)$. In this case,
    \begin{align}\label{eq:poisson_ub}
        \Prob{|Z_{ij}| = k-2} = \Prob{\text{Poi}(n A(x,r)) = k-2}. \nonumber
    \end{align}  
    \item[Case 2.] When $i \not \in K$ and $j \in K$ (or vice versa using a symmetric argument), the common neighbours can be found in the intersection $B(X_i, r) \cap B(X_j,r)$ if they are not part of the planted clique, or they can be found in the entire ball $B(X_i,r)$ if they are part of the planted clique. Then 
    \begin{align}
        \Prob{|Z_{ij}| = k-2} = \Prob{\text{Poi}(n A(x,r)) + W = k-2}, \nonumber
    \end{align}
    where $W$ is the number of clique vertices in $B(X_i, r) \setminus B(X_j,r)$.
    \end{description}
In {\bf Case 1}, we will bound $\Prob{\mathcal{B}_2}$ by taking the supremum over $0 \leq x \leq r$:
\[\Prob{\text{Poi}(n A(x,r)) = k-2}\le \sup_{x\in(0,r)}\Prob{\text{Poi}(n A(x,r)) = k-2}. \]
Observe that, when $x$ increases from $0$ to $r$, the Poisson parameter $nA(x,r)$ decreases from $\mu$ to $c_{2,d}\mu$, with $c_{2,d} > 0$. Then, we have the following:
    \begin{enumerate}
        \item[(i)] If $k-2 \leq c_{2,d}\mu$, then 
        \begin{align*}
            \sup_{x\in(0,r)}\Prob{\text{Poi}(n A(x,r)) = k-2} &= \Prob{\text{Poi}(c_{2,d} \mu) = k-2} = \frac{e^{-c_{2,d}\mu} (c_{2,d}\mu)^{k-2}}{(k-2)!}.
        \end{align*}
        \item[(ii)] If $k-2 \geq \mu$, then 
        \begin{align*}
            \sup_{x\in(0,r)}\Prob{\text{Poi}(n A(x,r)) = k-2} & = \Prob{\text{Poi} (\mu) = k-2} = \frac{e^{-\mu} \mu^{k-2}}{(k-2)!}.
        \end{align*}
        \item[(iii)] If $c_{2,d} \mu < k-2 < \mu$, then 
        \begin{align*}
            \sup_{x\in(0,r)}\Prob{\text{Poi}(n A(x,r)) = k-2} & = \Prob{\text{Poi} (k-2) = k-2} = \frac{((k-2)/e)^{k-2}}{(k-2)!}.
        \end{align*}
    \end{enumerate}

\noindent In {\bf Case 2}, we need some extra steps.  
\begin{enumerate}
        \item[(i)] If $k-2 \leq c_{2,d}\mu$, we use the fact that the Poisson density is monotone on $[0,nA(x,r)]$:
        \begin{align*}
            &\Prob{\text{Poi}(nA(x,r))+W = k-2}\le \sup_{x\in(0,r)}\max_{0\le l\le k-2}\Prob{\text{Poi}(n A(x,r)) = l} \\
            &\quad =\max_{0\le l\le k-2}\Prob{\text{Poi}(c_{2,d}\mu) = l} = \Prob{\text{Poi}(c_{2,d} \mu) = k-2} = \frac{e^{-c_{2,d}\mu} (c_{2,d}\mu)^{k-2}}{(k-2)!}.
        \end{align*}
        \item[(ii)] If $k-2 \geq \mu$, then we use the Markov inequality and the fact that $\E{W}\le k\mu/n$:
        \begin{align*}
            &\Prob{\text{Poi}(nA(x,r))+W = k-2}\le \Prob{\text{Poi}(nA(x,r)) = k-2}+\Prob{W\ge 1}\\& 
            \quad \le  \Prob{\text{Poi}(\mu) = k-2} + \frac{k\mu}{n} = \frac{e^{-\mu} (\mu)^{k-2}}{(k-2)!} + \frac{k\mu}{n} = \frac{e^{-\mu} (\mu)^{k-2}}{(k-2)!} + o(1).
        \end{align*}
        
        \item[(iii)] If $c_{2,d} \mu < k-2 < \mu$, then 
        \begin{align*}
            \sup_{x\in(0,r)}\Prob{\text{Poi}(n A(x,r)) +W = k-2} &\le \Prob{\text{Poi} (k-2) = k-2}\\ &= \frac{(k-2)^{k-2}e^{-(k-2)}}{(k-2)!}.
        \end{align*}
    \end{enumerate} 
The result follows by conditioning on the number of vertices, applying the union bound, and finally integrating over the number of vertices, verbatim with Lemma~\ref{prop:CNfailB1new}.
\end{proof}

Before proving the main result for the CN-algorithm, we want to highlight that the bound in \eqref{eq:CNfailB2new} is not always informative. Indeed, when $c_{2,d} \mu < k-2 < \mu$, we can consider separately the two cases:
\begin{itemize}
    \item If $\mu = \omega(1)$, then Stirling's approximation yields 
    \begin{equation}
    \frac{\mu n}{2(k-2)!}\left(\frac{k-2}{e}\right)^{k-2}  = (1 + o(1)) \frac{\mu n}{\sqrt{2 \pi(k-2)}} \nonumber
    \end{equation}
    which diverges for $n \to \infty$.
    \item If $\mu = O(1)$, then $\frac{\mu n}{2(k-2)!}\left(\frac{k-2}{e}\right)^{k-2} = n O(1)$, which again diverges.
\end{itemize}
The conclusion here is that event $\mathcal{B}_2$ may not vanish if $k$ is in the interval $(c_{2,d} \mu, \mu)$. Recall that if additionally $\mu$ is too small (that is, $\mu = O(\log n + \log\log n)$) the quantity $\Prob{\mathcal{B_1}}$ may not be vanishing, and Lemma \ref{prop:CNfailB1new} becomes not informative. In turn, we cannot guarantee the CN-algorithm's success in this region, as the sufficient condition $\min(\Prob{\mathcal{B_1},\mathcal{B_2}}) \to 0$ is not proved. The intuition that such a condition is not verified is further confirmed by the numerical experiments in Section \ref{sec:numerical_experiments}. Indeed, Figure \ref{fig:compare_mu5} and \ref{fig:compare_mu20} show a sudden drop in the CN-algorithm performance, exactly when the parameters lie inside the orange band in Figure \ref{fig:success_CN_algo} that delineates the region $k \in (c_{2,d} \mu, \mu)$.

Combining the previous lemmas, we can now demonstrate the main result for the CN algorithm.

\begin{proof}[Proof of Theorem \ref{thm:successCNalgo}]
By \eqref{eq:failure-CN-AB12}, we need that $\Prob{\mathcal{A}}=o(1)$ and either $\Prob{\mathcal{B}_1}=o(1)$  or $\Prob{\mathcal{B}_2}=o(1)$. In particular,
\begin{itemize}
    \item Lemma~\ref{prop:CNfailA} implies that $\Prob{\mathcal{A}}=o(1)$ in not ill-imposed parameter range.
    \item From Lemma~\ref{prop:CNfailB1new}, we conclude that Equation \eqref{eq:cond2successCN} is the necessary condition for $\Prob{\mathcal{B}_1}=o(1)$. Moreover, notice that the upper bound in Lemma~\ref{prop:CNfailB1new}(ii) goes to zero for large enough $M$ because $k<\alpha n$, and that $\mu\sim \gamma n$ automatically satisfies \eqref{eq:cond2successCN}.  
    \item Lastly, from Lemma~\ref{prop:CNfailB2new} we see that condition \eqref{eq:cond3successCN} implies $\Prob{\mathcal{B}_2}=o(1)$.
\end{itemize}
\end{proof}

\section{Conclusions and future research}\label{sec:conclusion}

In the present paper, we have studied a variation of the classical planted clique problem that was first formulated for Erd\H{o}s--R\'enyi graphs. We showed that the same task for geometric graphs provides a more diverse setting. Moreover, it turned out that the geometrical properties of a graph create the possibility of recovering very small planted cliques (even planted edges), which is rather remarkable. 

Two algorithms were studied in the paper from both theoretical and practical perspectives. The first one, the VD-algorithm, is rather simple and was introduced in \cite{kuvcera1995expected}. We have conducted a comprehensive theoretical analysis of this algorithm for random geometric graphs, revealing asymptotic thresholds for exact recovery. The second one, the CN-algorithm, was shown to recover the planted clique for a wider range of parameters of random geometric graphs. Some numerical experiments were conducted on random geometric graphs. They confirm the obtained theoretical results and demonstrate that the devised algorithms can be applied in practice to real-life tasks. 

Several open questions can be further investigated. Firstly, the planted clique problem can be easily reformulated for the case of soft random geometric graphs, such as Waxman graphs. Another direction would be to consider RGG with no fixed dimension. Indeed, when $d$ grows to infinity, geometric random graphs converge to Erd\H{o}s--R\'enyi graphs \cite{bubeck2016testing,devroye2011high}; hence, it would be interesting to understand the impact of high-dimensionality in the algorithmic performance.\\
Another possible direction of research is to consider that a planted clique can be defined in a slightly different manner. For instance, one natural way to build a planted clique is geometrical: take a circle in the unit square and connect all the vertices with positions inside it. It is clear that the CN-algorithm, for example, is useless in this case. We do not know if there is another algorithm that solves this problem completely, but all ideas in this direction will be of great interest. \\
Ultimately, one can focus on refining the existing algorithms in terms of complexity and practical implementation.

\section*{Declarations}

\paragraph{Acknowledgements}
We thank Joost Jorritsma for insightful discussions of the proof of Lemma \ref{prop:CNfailB2new}.

\paragraph{Funding}
RM was supported by PNRR MUR project GAMING ``Graph Algorithms and MinINg for Green agents'' (PE0000013, CUP D13C24000430001).  The work of NL is supported by the
Netherlands Organisation for Scientific Research (NWO) through the Gravitation NETWORKS grant 024.002.003, and by the National Science Foundation under Grant No. DMS-1928930 while the author was in residence at the Simons Laufer Mathematical Sciences Institute in Berkeley, California, during the Spring semester 2025.

\paragraph{Conflict of interest/Competing interests}
The authors declare no potential conflict of interest.

\paragraph{Code availability}
The produced code is available on request.

\bibliographystyle{unsrt}  
\bibliography{references}  

\end{document}